\theoremstyle{plain}
\newtheorem{theorem}{Theorem}[section]
\newtheorem{corollary}[theorem]{Corollary}
\newtheorem{lemma}[theorem]{Lemma}
\newtheorem{proposition}[theorem]{Proposition}
\theoremstyle{definition}
\newtheorem{example}[theorem]{Example}
\newtheorem{definition}[theorem]{Definition}
\newtheorem{remark}[theorem]{Remark}
\theoremstyle{remark}
\begin{document}
\title{SP-rings with zero-divisors}

\author{Malik Tusif Ahmed and Tiberiu Dumitrescu}
\address{Abdus Salam School of Mathematical Sciences GCU Lahore, Pakistan}
\email{tusif.ahmed@sms.edu.pk, tusif.ahmad92@gmail.com (Ahmed)}
\address{Facultatea de Matematica si Informatica,University of Bucharest,14 A\-ca\-de\-mi\-ei Str., Bucharest, RO 010014,Romania}
\email{tiberiu@fmi.unibuc.ro, tiberiu\_dumitrescu2003@yahoo.com (Dumitrescu)}

\begin{abstract}
We characterize the commutative rings whose ideals (resp. regular ideals) are products of radical ideals.
\end{abstract}

\thanks{2000 Mathematics Subject Classification: Primary 13A15, Secondary 13F05.}
\keywords{radical factorization, ring with zero divisors, almost multiplication ring}

\maketitle


\section{Introduction}
In \cite{VY},  
Vaughan and Yeagy introduced and studied  {\em SP-domains}, that is,  integral domains whose ideals are products of radical ideals (here ``SP'' stands for {\em semiprime ideals} another name for  radical ideals). A Dedekind domain is clearly SP. An SP-domain $D$ is almost-Dedekind (i.e. $D_M$ is a discrete rank one valuation domain for each maximal ideal $M$ of $D$) cf.  \cite[Theorem 2.4]{VY}). Examples of  SP-domains which are not Dedekind and of  almost Dedekind domains which are not SP-domains are given in \cite{VY}.
The study of SP-domains was continued by Olberding in \cite{O} who gave several characterizations for SP-domains  inside the class of almost Dedekind domains  \cite[Theorem 2.1]{O}. He also proved that given a Boolean topological space $X$, there exists an SP-domain $D$ such that $Max(D)$ is homeomorphic to $X$  \cite[Theorem 3.2]{O}.

In this paper we  study two generalizations of  SP-domain concept in the  setup of (commutative) rings with zero-divisors. In Section 2 we consider the class of rings whose  regular ideals are products of radical ideals (called by us {\em SP-rings}). As usual, a {\em regular ideal} is an ideal containing a regular element (i.e. non-zerodivisor). Denote  the set of regular elements of a ring $A$ by $Reg(A)$.
A Dedekind ring, i.e. a ring whose regular ideals are products of prime ideals  is clearly an SP-ring. For technical convenience, in Section 2, we restrict our study to Marot rings (i.e. to rings whose regular ideals are generated by regular elements, see \cite[page 31]{H}). 

A  localization  of an SP-ring  with respect to a multiplicative set consisting of regular elements is an SP-ring (Proposition \ref{1}), but a  localization of an SP-ring at some prime ideal is not necessarily an SP-ring
(Remark \ref{13}). For the idealization of a module, the SP-ring property is characterized in Proposition \ref{6}.

If $A$ is an SP-ring, then it  is an N-ring, that is, $A_{(M)}$ is a  discrete rank one Manis valuation ring for each maximal regular ideal $M$ of $A$ (Theorem \ref{10}). This result extends \cite[Theorem 2.4]{VY} recalled  above.
Here $A_{(M)}$ is the regular localization of $A$ at $M$, i.e. the fraction ring  $A_T$ where $T=Reg(A)\cap (A-M)$.
In particular, the regular-Noetherian SP-rings are exactly the Dedekind rings
(Corollary \ref{19}).
Theorem \ref{7} characterizes SP-rings among N-rings; it is a natural extension of \cite[Theorem 2.1]{O} to SP-rings.
If $A$ is a von Neuman regular ring containing $\mathbb{Q}$, then $A[X]$ is an SP-ring (Corollary \ref{18}). 
 An infinite direct product of rings is an SP-ring iff all factors are SP-rings and at most finitely many of them are not total quotient rings (Corollary \ref{12}).

In Section 3, we extend SP-domains in another  way: we consider the class of  rings whose  ideals are products of radical ideals (called by us {\em SSP-rings} for strongly SP-rings). A  {\em ZPI ring} (i.e. a ring whose proper ideals are products of prime ideals) is  clearly an SSP-ring. In Theorem \ref{11} we prove that
 an  SSP-ring $A$ is an almost multiplication ring, i.e. for every prime ideal $P$, the localization $A_P$ is a  discrete rank one valuation domain  or special primary ring ($=$ a local ring  whose proper ideals are powers of its maximal ideal).
 This is another extension of \cite[Theorem 2.4]{VY}.
 Consequently, an SSP-ring which has only finitely many minimal prime ideals is  a finite direct product of special primary rings and SP-domains
(Corollary \ref{15}). 
In particular, the  Noetherian SSP-rings are exactly the ZPI-rings (Corollary \ref{14}).
A multiplication module over a von Neuman regular ring has SSP idealization (Proposition \ref{16}).
Theorem \ref{17} characterizes  SSP-rings among   almost multiplication rings  whose localizations at  maximal ideals are discrete valuation domains.  
The main results of this paper are Theorems \ref{10}, \ref{7} and \ref{11}.

All rings considered in this paper are commutative and unitary. For any undefined terminology or for basic facts about rings with zerodivisors see \cite{G}, \cite{H} and \cite{LM}. We use the common practice of abbreviating ``if and only if'' by ``iff''.

\section{SP-rings}

All rings in this section are {\em Marot rings}, that is,  their regular ideals are generated by regular elements (see \cite[page 31]{H}).
We extend the concept of SP-domain to rings with zero-divisors.

\begin{definition}
A ring  is called an {\em SP-ring} if its regular ideals are products of radical ideals.
\end{definition}

Thus a domain is an SP-ring iff it is an SP-domain. Total quotient rings and Dedekind rings are clearly SP-rings. Recall that a {\em Dedekind ring} is a ring whose regular ideals are products of prime ideals (see \cite[Theorem 17]{Gf}).
It is easy to see that a direct product of two rings is an SP-ring iff the factors are SP-rings.

 For  be a prime ideal $P$ of a ring $A$, denote (as usual) by $A_{(P)}$  the regular localization of $A$ at $P$, i.e. the fraction ring  of $A$ with denominators in $Reg(A)\cap (A-P)$. The ring embeds naturally in the total quotient ring $K=A_{Reg(A)}$ of $A$; besides, if $A$ is a Marot ring, then $A_{(P)}$ coincides with the {\em large quotient ring} $A_{[P]}=\{ x\in K\mid sx\in A$ for some $s\in A-P\}$, see \cite[page 28 and Theorem 7.6]{H}.

\begin{proposition}\label{1}
 If $A$ is an SP-ring, $S\subseteq Reg(A)$ a multiplicative set and $P$ a prime ideal of $A$, then  $A_S$ \em (\em in particular $A_{(P)}$\em )\em\  is an SP-ring.
\end{proposition}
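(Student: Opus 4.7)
The plan is to reduce the factorization problem in $A_S$ to the SP hypothesis on $A$ by contracting, factoring, and extending. Let $J$ be a regular ideal of $A_S$.

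First I would verify the transfer of regularity between $A$ and $A_S$. Since $S\subseteq Reg(A)$, the canonical map $A\to A_S$ is injective, and an element $a\in A$ is regular in $A$ if and only if $a/1$ is regular in $A_S$: if $(a/1)(b/t)=0$ then $uab=0$ for some $u\in S\subseteq Reg(A)$, so $ab=0$ and the rest is routine. Now pick a regular element $a/s\in J$. Multiplying by the regular element $s/1$ yields $a/1=(a/s)(s/1)\in J$, and $a/1$ is regular in $A_S$, so $a\in Reg(A)$. Thus the contraction $I:=J\cap A$ contains a regular element of $A$ and is therefore a regular ideal of $A$.

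Next I would invoke the SP hypothesis on $A$ to write $I=P_1 P_2\cdots P_n$ with each $P_i$ a radical ideal of $A$. By the standard fact that every ideal of a localization equals the extension of its contraction, $J=IA_S$, and since ideal extension commutes with products this gives
\[
J=(P_1 A_S)(P_2 A_S)\cdots (P_n A_S).
\]
Finally, since radicals commute with localization, $\sqrt{P_i A_S}=\sqrt{P_i}\,A_S=P_i A_S$, so each factor is a radical ideal of $A_S$. Any factor that happens to equal $A_S$ may simply be dropped, as $A_S$ is the identity for ideal multiplication.

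I do not anticipate a genuine obstacle: the whole argument is a straightforward contract-factor-extend maneuver. The only subtlety is ensuring that the contraction $I=J\cap A$ is itself regular, which is precisely where one uses $S\subseteq Reg(A)$ together with the bijective correspondence between regular elements of $A$ and regular elements of $A_S$ in the image of $A$.
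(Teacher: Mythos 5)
Your proof is correct and follows essentially the same route as the paper's: contract the regular ideal $J$ to $I=J\cap A$, factor $I$ into radical ideals using the SP hypothesis, and extend back to $A_S$. The paper's version is terser; you supply the (routine but worth checking) details that the contraction is regular and that extensions of radical ideals remain radical.
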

\begin{proof}
Let $H$ be a regular ideal of $A_S$. Then the regular ideal $J:=H\cap A$   is a product of radical ideals: $J=J_1\cdots J_n$. 
So each ideal $H_i=J_iA_S$ is radical and $H=H_1\cdots H_n$.
\end{proof}

\begin{remark}\label{13}
A  localization of an SP-ring at some prime ideal is not necessarily an SP-ring. Indeed, let $A=k[[x,y,z,v]]/(x^2,xy,xz,xv)$, where $k$ is a field. Then $A$ is a total quotient ring so an SP-ring. But $A_{(x,y,z)A}=k[[y,z,v]]_{(y,z)}$ is not an SP-ring, because it has dimension two and a Noetherian SP-domain is Dedekind, cf. \cite[Theorem 2.4]{VY}. Note also that $A/xA=k[[y,z,v]]$, so a factor ring of an SP-ring is not necessarily an SP-ring. 
\end{remark}

 Let $A$ be a ring and  $M$ an $A$-module. Recall that the {\em module idealization} $A(+)M$ of $M$ is the ring with abelian group $A\oplus M$ and multiplication given by $(a,x)(b,y)=(ab,ay+bx)$ for $a,b\in A$ and $x,y\in M$ (see \cite{AW}). Set $S=A-(Z(A)\cup Z(M))$  where $Z(A)$ (resp. $Z(M)$) are the set of zerodivisors on $A$ (resp. $M$).
 The next result translates the SP-ring property of $A(+)M$ in terms of $A$ and $M$. 

\begin{proposition}\label{6}
 With notations above,  $A(+)M$ is an SP-ring iff every ideal $I$ of $A$ not disjoint of $S$  is a product of radical ideals and $sM=M$ for each $s\in S$.
\end{proposition}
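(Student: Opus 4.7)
The plan is to translate between the ideal structure of $A(+)M$ and data over $A$. Since $(0,x)^2 = 0$ for every $x\in M$, every radical ideal of $A(+)M$ must contain $0(+)M$, so the radical ideals of $A(+)M$ are precisely those of the form $I(+)M$ with $I$ a radical ideal of $A$. Induction on $n$ yields the product formula
\[
(I_1(+)M)\cdots(I_n(+)M) \;=\; I_1\cdots I_n \;(+)\; \sum_{j=1}^{n}\Bigl(\prod_{i\neq j}I_i\Bigr)M,
\]
which is an ideal of the homogeneous form $I'(+)N'$. Also, for $s\in S$ every $(s,x)$ is regular in $A(+)M$: if $(s,x)(b,y)=0$, then $sb=0$ forces $b=0$, and then $sy=0$ forces $y=0$. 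Hence $I(+)M$ is regular whenever $I\cap S\neq\emptyset$.

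For the forward direction, assume $A(+)M$ is an SP-ring. If $I\subseteq A$ satisfies $I\cap S\neq\emptyset$, then $I(+)M$ is a regular ideal and so factors as $\prod_k(I_k(+)M)$ with each $I_k$ radical; reading off first coordinates via the product formula gives $I=I_1\cdots I_n$, a product of radical ideals of $A$. For the divisibility condition $sM=M$, argue by contradiction: pick $s\in S$ and $x\in M\setminus sM$, and consider the regular principal ideal $(s,x)(A(+)M)$. Any element has the form $(s,x)(a,y)=(sa,\,sy+ax)$, so elements with first coordinate equal to $s$ require $a=1$ (since $s$ is regular), leaving the second coordinate to range over the coset $x+sM$, which is not an $A$-submodule of $M$. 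But any product of radicals is of the form $I'(+)N'$, for which the corresponding ``slice at $s$'' would be the submodule $N'$. This contradiction forces $sM=M$ for every $s\in S$.

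For the converse, assume both conditions and take a regular ideal $J$ of $A(+)M$ containing some regular element $(s,z)$ with $s\in S$. For any $w\in M$, use $sM=M$ to write $w=sw'$; then $(0,w)=(0,sw')=(s,z)(0,w')\in J$, so $0(+)M\subseteq J$, and consequently $J=I(+)M$ where $I:=\{a\in A : (a,x)\in J\text{ for some }x\}$ is an ideal of $A$ containing $s\in S$. By hypothesis write $I=I_1\cdots I_n$ with each $I_k$ radical. Since $I\subseteq I_k$, each $I_k$ contains $s$, hence $\prod_{i\neq j}I_i\ni s^{n-1}$. Iterating $sM=M$ yields $s^{n-1}M=M$, so $\sum_j(\prod_{i\neq j}I_i)M=M$, and the product formula confirms $J=(I_1(+)M)\cdots(I_n(+)M)$.

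The main obstacle is the forward proof that $sM=M$: one must exhibit an ideal of $A(+)M$ that detects the failure of $s$-divisibility of $M$. The chosen twisted principal ideal $(s,x)(A(+)M)$ with $x\notin sM$ has a non-homogeneous ``fiber above $s$,'' namely the coset $x+sM$, which is deliberately incompatible with the shape $I'(+)N'$ that every product of the radical ideals of $A(+)M$ must take.
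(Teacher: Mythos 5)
Your proof is correct and follows the same overall skeleton as the paper's: radical ideals of $A(+)M$ are exactly the ideals $I\oplus M$ with $I$ radical in $A$, products of such homogeneous ideals are homogeneous, and the two stated conditions are read off from a homogeneous factorization. The difference is that where the paper invokes \cite[Theorems 3.2, 3.5, 3.9]{AW} as black boxes, you supply direct arguments; the most substantive replacement is your proof that $sM=M$. The paper gets ``every regular ideal is homogeneous, hence $sM=M$'' from \cite[Theorem 3.9]{AW}, whereas you exhibit the explicit witness $(s,x)(A(+)M)$ with $x\in M\setminus sM$, whose fibre over $s$ is the coset $x+sM$ and therefore cannot equal the submodule $N'$ appearing in any homogeneous ideal $I'\oplus N'$ --- in particular in any product of radical ideals. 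That is a clean, self-contained argument and arguably more illuminating than the citation. One small gap remains in your converse: you take a regular element $(s,z)$ of the regular ideal $J$ and assert $s\in S$, but you only proved the implication $s\in S\Rightarrow (s,z)$ regular, and here you need its converse (every regular element of $A(+)M$ has first coordinate in $S$). Without $s\in S$ you can invoke neither hypothesis. The fact is true and easy --- if $s\in Z(M)$, pick $m\neq 0$ with $sm=0$; then $(s,z)(0,m)=0$; if $s\in Z(A)$ with $sb=0$, $b\neq 0$, then $(s,z)(b,0)=(0,bz)$, so either $(b,0)$ or $(0,bz)$ annihilates $(s,z)$ --- and it is exactly \cite[Theorem 3.5]{AW}, which the paper cites, but it should be stated in your write-up.
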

\begin{proof} 
Set $B=A(+)M$.
Following \cite{AW}, call an ideal $J$ of $B$ {\em homogeneous}
if it has the form $I \oplus N$ where $I$ is an ideal of $A$,
$N$ is a submodule of $M$ and $IM \subseteq N$. It can be checked directly (see also the last paragraph of page 12 in \cite{AW}) that the product of two homogeneous ideals is homogeneous.

$(\Rightarrow)$  By \cite[Theorem 3.2]{AW},  radical ideals of $B$ are homogeneous, more precisely they 
have the form $J\oplus M$ where $J$ is a radical ideal of $A$. Since $B$ is an SP-ring and, as noticed above, a product of homogeneous ideals is homogeneous, it follows that 
every regular ideal of $B$ is homogeneous, so $sM=M$ for each $s\in S$, cf. 
\cite[Theorem 3.9]{AW}. Let $J$ be an ideal of $A$ with $J\cap S\neq \emptyset$. By  \cite[Theorem 3.5]{AW}, $J\oplus M$ is a regular ideal of $B$, hence $J\oplus M=(J_1\oplus M)\cdots (J_n\oplus M)$ with each $J_i$ a radical ideal of $A$. We get $J=J_1\cdots J_n$.

$(\Leftarrow)$ Let $I$ be a regular ideal of $B$. By \cite[Theorem 3.9]{AW}, $I=J\oplus M$ for some ideal $J$ of $A$ with $J\cap S\neq \emptyset$. By our assumption, $J$ is a product of radical ideals of $A$, say $J=J_1\cdots J_n$. Since $sM=M$ for each $s\in S$, we get $J\oplus M=(J_1\oplus M)\cdots (J_n\oplus M)$ with each $J_i\oplus M$ a radical ideal of $B$, cf. \cite[Theorem 3.2]{AW}.
\end{proof}

\begin{example}
 Let $A$ be a ring and $\Gamma$ the set of regular ideals of $A$ which cannot be written as a product of radical ideals. For each $I\in \Gamma$ choose a maximal  ideal $N_I$ containing $I$, then set $M=\oplus_{I\in \Gamma}A/N_I$.
 Then  $A(+)M$ is an SP-ring. Indeed, set $S=A-(Z(A)\cup Z(M))$ and note that $Z(M)=\cup_{I\in \Gamma}N_I$. If $s\in S$, then $s(A/N_I)=A/N_I$ for every $I\in \Gamma$, so $sM=M$. Apply Proposition \ref{6}. 
\end{example}

Recall that a ring $A$ is an {\em N-ring} if $A_{(M)}$ is a discrete rank one Manis valuation ring  for each maximal regular ideal $M$ of $A$ (see \cite{Gf} and \cite{L}). A domain is an N-ring iff it is almost Dedekind. Our next result extends \cite[Theorem 2.4]{VY}.

\begin{theorem}\label{10}
If    $A$ is an  SP-ring, then $A$ is an N-ring.
\end{theorem}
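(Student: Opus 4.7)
The plan is to reduce to a local setting via Proposition~\ref{1} and then distill a discrete valuation structure from the radical factorizations of principal regular ideals. By Proposition~\ref{1}, the regular localization $A_{(M)}$ is itself a Marot SP-ring whose only maximal regular ideal is $MA_{(M)}$, so I may assume that $A$ itself is a Marot SP-ring with $M$ as its unique maximal regular ideal, and I must show that $A$ is a discrete rank-one Manis valuation ring.

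The crucial and most delicate step, where I expect the main obstacle to lie, is to prove that $M$ is the only regular prime of $A$. Suppose for contradiction that a regular prime $Q$ satisfies $Q \subsetneq M$; by the Marot hypothesis there are regular elements $a \in Q$ and $b \in M \setminus Q$. Writing $(a) = I_1 \cdots I_k$ as a product of radical ideals, each $I_j$ is regular and lies in $M$, and I would compare this factorization against the SP-factorizations of $(b)$ and $(ab) = (a)(b)$, using the auxiliary SP-ring $A_{(Q)}$ (again supplied by Proposition~\ref{1}) as a second vantage point. The expected outcome is that the arithmetic of these factorizations pins some $I_j$ between $Q$ and $M$ in an untenable way, yielding the contradiction.

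Once $M$ is known to be the only regular prime, any radical regular ideal $J$ of $A$ equals the intersection of its minimal primes, all of which are regular (since $J$ is regular) and hence all equal to $M$; so $J = M$. Applied to each factor in $(a) = I_1 \cdots I_k$ this gives $(a) = M^{k(a)}$ for every regular $a \in M$ and some $k(a) \geq 1$. If $k(a) \geq 2$ for every such $a$, then every regular element of $M$ lies in $M^2$; since $M$ is regular in the Marot ring $A$, it is generated by its regular elements, so $M \subseteq M^2$, and therefore $M = M^2$. Induction gives $M = M^n$ for all $n \geq 1$, whence $(a) = M$, so $a$ generates $M$; then $a = a^2 s$ for some $s \in A$, and regularity of $a$ forces $as = 1$, making $a$ a unit, a contradiction. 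Hence some regular $t \in M$ satisfies $M = tA$, and an identical cancellation rules out $M^n = M^{n+1}$ for every $n \geq 1$.

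Finally, the assignment $v(a) = n$, where $a$ is regular and $(a) = (t^n) = M^n$, extends naturally to the total quotient ring of $A$ and defines a discrete rank-one Manis valuation with valuation pair $(A, M)$, which is the required conclusion.
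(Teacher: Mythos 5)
There is a genuine gap at exactly the point you yourself flag as ``the crucial and most delicate step'': ruling out a regular prime $Q\subsetneq M$. What you offer there is a plan, not an argument --- ``I would compare this factorization against the SP-factorizations of $(b)$ and $(ab)$\dots{} The expected outcome is that the arithmetic of these factorizations pins some $I_j$ between $Q$ and $M$ in an untenable way.'' Nothing in that sketch produces a contradiction: if $(a)=I_1\cdots I_k$ with each $I_j$ radical, then each $I_j$ is just some radical ideal containing $(a)$, and $(ab)=(a)(b)$ factors by concatenating the two factorizations, so no tension arises from comparing them; likewise $A_{(Q)}$ is again an SP-ring facing the same unresolved question one level down. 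The paper's proof of this step is where all the real work happens, and it uses ideas absent from your proposal: shrink $M$ so that it is minimal over $(P,y)$ for a regular $y\in M-P$; prove $(1)$ $M\neq M^2$ by comparing the forced factorizations $(P,y)=M^i$ and $(P,y^2)=M^j$; prove $(2)$ that every regular prime strictly inside $M$ lies in $M^2$ (via the observation that $(Q,z^2)$ must then be a \emph{single} radical ideal, forcing $(Q,z)=(Q,z^2)$); extract a regular $\pi\in M-M^2$ with $M=\pi A$; and then, factoring $sA$ for a regular $s\in P$, obtain an invertible radical ideal $H\subseteq P$ with $H=\pi J$, $J=\pi J$, and cancel the invertible $J$ to reach $\pi A=A$. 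None of this is recoverable from your outline, so the theorem is not proved.

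The rest of your write-up is essentially sound and parallels the paper. Once $M$ is known to be the only regular prime, your deduction that every regular radical ideal equals $M$, hence every regular principal ideal is a power of $M$, and your Marot-plus-cancellation argument that $M\neq M^2$ and $M=tA$, are all correct (the paper gets the same conclusion more quickly by just noting every regular ideal is a power of $M$). Your final step, however, also leans on an unproved assertion: the claim that $v$ ``extends naturally to the total quotient ring'' and yields a discrete rank-one \emph{Manis} valuation pair $(A,M)$ requires defining the valuation on zero-divisors of the total quotient ring, which is not automatic; the paper sidesteps this by citing \cite[Theorem 1]{L}, and you should do the same rather than assert the extension.
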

\begin{proof}
We may suppose that $A$ is not a total quotient ring.
Let $M$ be a regular maximal ideal of $A$. By Proposition \ref{1}, $A_{(M)}$ is an SP-ring. Changing $A$ by $A_{(M)}$,  we may assume that $M$  is the only  regular maximal ideal of $A$. 
It suffices to show that $M$ is the only  regular prime ideal of $A$, for then  every regular ideal is a power of $M$ (as $A$ is an SP-ring), so
 $A$ is an N-ring by \cite[Theorem 1]{L}.
 Assume, to the contrary, that $A$ has a regular prime ideal  $P\subset M$. As $A$ is a Marot ring, we can pick a regular element $y\in M-P$. Shrinking $M$, we may assume that $M$ is minimal over $(P,y)$. Assume for the moment that the following two assertions hold.

$(1)$ $M\neq M^2$.

$(2)$ Every regular prime ideal $Q\subset M$ is contained in $M^2$.
\\
 By $(1)$ we can pick a regular element $\pi\in M-M^2$. From $(2)$ we get that every regular radical ideal except $M$ is contained in $M^2$. Then $M=\pi A$ because $A$ is an SP-ring. Let $s\in P$ be a regular element  and write $sA$ as a product of radical ideals $sA=H_1\cdots H_m$. So the prime ideal $P$ contains some (invertible) ideal $H_i:=H$.
 From $H\subseteq P\subset M=\pi A$, we get succesively: $H=\pi J$ for some invertible ideal $J\subseteq M$, $J^2\subseteq \pi J\subseteq H$, $J\subseteq \sqrt{H}=H=\pi J$,  $J=\pi J$.   Since $J$ is invertible,  we get the contradiction $\pi A=A$. It remains to prove $(1)$ and $(2)$.

{\em Proof of $(1)$.} 
 As $A$ is an SP-ring, it follows that  $(P,y)$ and $(P,y^2)$  are products of radical ideals, hence $(P,y)$ and $(P,y^2)$  are powers of $M$, 
 because $M$ is minimal over $(P,y)$. Then $M\neq M^2$, otherwise we get $(P,y)=M=(P,y^2)$ which leads to a contradiction after moding out by $P$, because $y\notin P$.

{\em Proof of $(2)$.} Assume, to the contrary, there exists a regular prime ideal
$Q\subset M$ such that $Q\not\subseteq M^2$. Pick a regular element  $z\in M-Q$. 
As $A$ is an SP-ring, it follows that $(Q,z^2)$ is a product of radical ideals, so $(Q,z^2)$ is a  radical ideal, because $Q\not\subseteq M^2$. We get  $(Q,z^2)= (Q,z)$ which gives a contradiction after moding out by $Q$. 
  
\end{proof}

Recall that a {\em regular-Noetherian ring}  is a ring whose regular ideals are finitely generated.  By \cite[Theorem 17]{Gf}, a regular-Noetherian N-ring is Dedekind. Thus we get:
\begin{corollary}\label{19}
 The regular-Noetherian SP-rings are exactly the Dedekind rings.
\end{corollary}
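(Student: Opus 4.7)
The plan is to exploit the two ingredients placed right before the corollary, namely Theorem \ref{10} and the cited \cite[Theorem 17]{Gf}, so that the proof is essentially a chain of implications with no new computation.

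For the forward direction, suppose $A$ is a regular-Noetherian SP-ring. By Theorem \ref{10}, $A$ is an N-ring, i.e. $A_{(M)}$ is a discrete rank one Manis valuation ring at every maximal regular ideal $M$. Since $A$ is assumed regular-Noetherian, the hypothesis of \cite[Theorem 17]{Gf} (which states that a regular-Noetherian N-ring is Dedekind) is satisfied verbatim, so $A$ is a Dedekind ring.

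For the converse, if $A$ is Dedekind, then by definition every regular ideal of $A$ is a product of prime ideals, and prime ideals are radical, so $A$ is certainly an SP-ring. The regular-Noetherian property for Dedekind rings is built in (again via \cite[Theorem 17]{Gf}, where Dedekind rings are identified as the regular-Noetherian N-rings), so $A$ is both regular-Noetherian and SP.

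There is no real obstacle here: the corollary is packaged so that the work already sits inside Theorem \ref{10} (the hard ingredient) and \cite[Theorem 17]{Gf} (the cited black box). The only care needed is to verify that the two directions each correspond to one invocation of these results, and that ``regular ideals are products of prime ideals'' does indeed imply regular-Noetherian (finitely many prime factors, each finitely generated in the Dedekind setting), which is also part of the Gilmer--Filippini characterization.
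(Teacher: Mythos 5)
Your proof is correct and follows exactly the route the paper intends: Theorem \ref{10} plus Griffin's \cite[Theorem 17]{Gf} for the forward direction, and the observation that prime ideals are radical (together with Griffin's characterization for the regular-Noetherian part) for the converse. (Minor note: the reference [Gf] is Griffin, not ``Gilmer--Filippini,'' but this does not affect the argument.)
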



 We prepare the way for giving an extension of \cite[Theorem 2.1]{O} to SP-rings (see Theorem \ref{7}). Basically we adapt the ideas from the proof of \cite[Theorem 2.1]{O} to Marot rings.

\begin{lemma}\label{2}
Let $A$ be an N-ring and $I$ a regular ideal of $A$. Then $I$ is radical iff $I\not\subseteq M^2$ for each maximal ideal $M$. In particular, if $I$ is a   radical ideal, then every ideal containing $I$ is also radical.
\end{lemma}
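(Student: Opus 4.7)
The plan is to reduce both directions to working in the discrete rank one Manis valuation ring $A_{(M)}$ for a regular maximal ideal $M$. First I note that the condition $I \not\subseteq M^2$ is automatic when $M$ does not contain $I$ (since $M^2 \subseteq M$), so in either direction we only need to inspect those maximal $M$ containing $I$, and each such $M$ is necessarily regular because $I$ contains a regular element. In the Manis valuation ring $A_{(M)}$, the regular ideals form the chain $(MA_{(M)})^n$, $n\geq 0$, and the unique regular radical ideal is $MA_{(M)}$ itself.

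For the forward direction, suppose $I$ is radical and regular. If $I \subseteq M^2$ for some regular maximal $M$, then $IA_{(M)}$ is a regular radical ideal contained in $(MA_{(M)})^2$. This is impossible since the only regular radical ideal of $A_{(M)}$ is $MA_{(M)}$, which is not contained in its own square. So $I \not\subseteq M^2$ for every maximal $M$.

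For the converse, assume $I \not\subseteq M^2$ for every maximal ideal $M$, and let $J := \sqrt{I}$. For each regular maximal $M$, the localization $IA_{(M)}$ is regular and not contained in $(MA_{(M)})^2$, so it is either $A_{(M)}$ (when $I \not\subseteq M$) or $MA_{(M)}$ (when $I \subseteq M$); the same dichotomy, driven by whether $J \subseteq M$, shows $JA_{(M)} = IA_{(M)}$ in both cases. Now fix a regular element $a \in J$ (the Marot hypothesis lets us generate $J$ this way, since $J$ is regular) and consider $K := (I : a)$. For each regular maximal $M$, the equality $a \in JA_{(M)} = IA_{(M)}$ produces an $s_M \in Reg(A) \setminus M$ with $s_M a \in I$, so $K \not\subseteq M$; in particular $K$ is a regular ideal. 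Since a regular ideal cannot be contained in any non-regular maximal ideal either, $K$ lies in no maximal ideal, hence $K = A$ and $a \in I$. Therefore $J \subseteq I$, and $I$ is radical.

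For the final assertion, if $I$ is a regular radical ideal and $I \subseteq I'$, then $I'$ is regular and $I' \not\subseteq M^2$ for every maximal $M$ (since $I \not\subseteq M^2$), so $I'$ is radical by the equivalence just established. The step I expect to require the most care is showing that localization agreement at every regular maximal ideal implies the global identity $I = J$; this is where the Marot assumption must be used explicitly to ensure that the transporter $(I:a)$ is itself regular and therefore unable to sit inside any maximal ideal at all.
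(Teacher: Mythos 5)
Your proof is correct, and its local core is the same as the paper's: both arguments reduce to the fact that in the discrete rank one Manis valuation ring $A_{(M)}$ the regular ideals are the powers of $MA_{(M)}$, so a regular ideal is radical there iff it is not contained in $(MA_{(M)})^2$ (both of you also quietly use $M^2A_{(M)}\cap A=M^2$ at the same spot). Where you genuinely diverge is the local-to-global passage in the converse direction: the paper simply invokes \cite[Theorem 6.1]{H}, which says that radicality of a regular ideal in a Marot ring can be tested on the regular localizations $A_{(M)}$ at the maximal ideals containing it, whereas you prove this globalization by hand, first checking $\sqrt{I}A_{(M)}=IA_{(M)}$ for every regular maximal $M$ and then pulling each regular $a\in\sqrt{I}$ back into $I$ via the colon ideal $(I:a)$, which meets the complement of every maximal ideal and is therefore all of $A$. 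Your route is longer but more self-contained, and it makes explicit exactly where the Marot hypothesis is used (to generate $\sqrt{I}$ by regular elements and to dispose of non-regular maximal ideals), which the citation conceals. The only point to tidy is the degenerate case in which $A$ has no regular maximal ideal: there your colon ideal receives no element $s_M$, but then $A$ is a total quotient ring, every regular ideal equals $A$, and the lemma is vacuous.
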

\begin{proof}
Let $\Delta$ be the set of maximal ideals containing $I$. By \cite[Theorem 6.1]{H}, $I$ is radical iff $IA_{(M)}$ is radical for each $M\in\Delta$. Fix 
$M\in\Delta$ and set $B=A_{(M)}$. Since $M$ is regular and $A$ is an N-ring, $B$ is a  discrete rank one Manis valuation ring. Hence $IB$ is a radical ideal iff $IB=MB$ iff $IB\not\subseteq M^2B$ iff  $I\not\subseteq M^2$, because  $M^2B\cap A=M^2$. 
\end{proof}


\begin{lemma}\label{3}
Let   $A$ be an N-ring and $I$ a regular ideal such that $(IA_{(M)})^{-1}=I^{-1}A_{(M)}$ for each  maximal ideal $M$ containing $I$. Then 
$I$ is invertible (hence finitely generated).
\end{lemma}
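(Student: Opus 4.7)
The plan is to prove $I \cdot I^{-1} = A$; then $I$ is invertible, and invertible fractional ideals are finitely generated (extract a finite expression $1 = \sum x_i y_i$ with $x_i \in I$, $y_i \in I^{-1}$ and note $I = (x_1, \ldots, x_n)$, since $zy_i \in A$ for any $z \in I$). So everything reduces to showing $I \cdot I^{-1} = A$, which I will do by localizing at each maximal ideal.

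First, I would observe that $I \cdot I^{-1}$ is itself a regular ideal of $A$. Since $A$ is Marot, the regular ideal $I$ contains a regular element $a$; and since $I \subseteq A$, we have $1 \in I^{-1}$, so $a = a \cdot 1 \in I \cdot I^{-1}$. Therefore any maximal ideal $M$ containing $I \cdot I^{-1}$ is regular, and by hypothesis $A_{(M)}$ is a discrete rank one Manis valuation ring. It thus suffices to show $(I \cdot I^{-1})A_{(M)} = A_{(M)}$ for every such $M$, since for a regular ideal $J$ the condition $JA_{(M)} = A_{(M)}$ is equivalent to $J \not\subseteq M$.

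Now split into cases on whether $M$ contains $I$. If $M \not\supseteq I$, the Marot property supplies a regular element $a \in I \setminus M$; this $a$ already lies in $I \cdot I^{-1}$, so $I \cdot I^{-1} \not\subseteq M$ immediately. If $M \supseteq I$, then $IA_{(M)}$ is a regular ideal in the discrete rank one Manis valuation ring $A_{(M)}$, so it is a power of the maximal regular ideal (generated by a uniformizer) and in particular invertible, giving $(IA_{(M)}) \cdot (IA_{(M)})^{-1} = A_{(M)}$. Applying the hypothesis $(IA_{(M)})^{-1} = I^{-1}A_{(M)}$ and the standard identity $(I \cdot I^{-1})A_{(M)} = (IA_{(M)})(I^{-1}A_{(M)})$ for extensions of fractional ideals, we conclude $(I \cdot I^{-1})A_{(M)} = A_{(M)}$, so again $I \cdot I^{-1} \not\subseteq M$.

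The main point to be careful about is the transition between global and local inverses: one needs that extension of fractional ideals commutes with products, and that regular ideals in a discrete rank one Manis valuation ring are invertible (both are standard in the Marot/Manis setting, the latter because such a ring behaves like a DVR on its regular part, with regular ideals equal to powers of $\pi A_{(M)}$ for a uniformizer $\pi$). With these in hand, the argument above shows $I \cdot I^{-1}$ escapes every maximal ideal, hence equals $A$, finishing the proof.
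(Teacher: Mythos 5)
Your proposal is correct and follows essentially the same route as the paper: show $II^{-1}$ is contained in no maximal ideal by localizing, using that $IA_{(M)}$ is invertible (principal, generated by a regular element) in the discrete rank one Manis valuation ring $A_{(M)}$ and then transferring via the hypothesis $(IA_{(M)})^{-1}=I^{-1}A_{(M)}$. The only cosmetic difference is your case split on whether $M\supseteq I$, which is vacuous since $1\in I^{-1}$ gives $I\subseteq II^{-1}\subseteq M$ automatically, as the paper notes.
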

\begin{proof}
Assume, to the contrary, that $II^{-1}$ is contained in some maximal ideal $M$ of $A$; so $I\subseteq M$. As $A$ is an N-ring, $IA_{(M)}=xA_{(M)}$ for some regular element $x\in I$. 
Using our assumption, we get $MA_{(M)}\supseteq II^{-1}A_{(M)}=IA_{(M)}(IA_{(M)})^{-1}=xA_{(M)}x^{-1}A_{(M)}=A_{(M)}$, a contradiction.
\end{proof}

Let $A$ be a ring, $E$ a nonempty subset of $A$ and $n\geq 1$. Denote by $V_n(E)$ the set $\{ P\in Spec(A)\mid E\subseteq P^n\}$. Note that when $A$ is an N-ring and $E$ contains a regular element, we have $V_n(E)=\{ M\in Max(A)\mid E\subseteq M^n\}$. 
Clearly, $V_{n+1}(E)\subseteq V_n(E)$. In particular, $V_1(E)$ is the usual Zariski closed set $V(E)$.

\begin{lemma}\label{4}
Let   $A$ be a ring and $I$ a proper  ideal of $A$ such that $V(I)\subseteq Max(A)$ and for every $M\in V(I)$ all powers of $M$ are distinct. The following are equivalent. 

$(a)$ $I$ is a product of radical ideals.

$(b)$  $I$ is a product  $J_1J_2\cdots J_n$ of radical ideals  with  $J_1\subseteq J_2\subseteq \cdots \subseteq J_n$.

$(c)$ The following three conditions hold

\quad\quad $(1)$ $IA_M$ is a power of $MA_M$ for each $M\in V(I)$, 

\quad\quad $(2)$ $V_n(I)$ is closed in $Spec(A)$ for each $n\geq 1$,

\quad\quad $(3)$ $V_n(I)$ is empty for  $n$ big enough. 
\end{lemma}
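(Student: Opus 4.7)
The plan is to verify (b)$\Rightarrow$(a)$\Rightarrow$(c)$\Rightarrow$(b). The first implication is immediate. For the rest, the key observation is that any radical ideal $K$ with $V(K)\subseteq Max(A)$ is entirely controlled by its localizations $KA_M$, which can only equal $A_M$ or $MA_M$; so a product of such ideals translates, at each $M\in V(I)$, into a power of $MA_M$. Condition (c) is thus really a coordinate-free description of this local picture, and the whole proof consists of passing between the two.

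For (a)$\Rightarrow$(c), write $I=K_1\cdots K_m$ with each $K_i$ radical. Since $V(K_i)\subseteq V(I)\subseteq Max(A)$, for every maximal ideal $M$, $K_iA_M$ is a radical ideal of $A_M$ whose only prime above it is $MA_M$ (when $K_i\subseteq M$) or nothing (when $K_i\not\subseteq M$); hence $K_iA_M$ is $MA_M$ or $A_M$. Setting $r_M=\#\{i:K_i\subseteq M\}$ gives $IA_M=(MA_M)^{r_M}$, which is (1), and the uniform bound $r_M\le m$ yields (3). For (2) one writes
\[V_n(I)=\bigcup_{S\subseteq\{1,\dots,m\},\,|S|=n}V\Bigl(\sum_{i\in S}K_i\Bigr),\]
a finite union of Zariski closed sets, where $\subseteq$ is immediate (each such $S$ forces $I\subseteq P^{|S|}$) and $\supseteq$ uses the distinct-powers hypothesis to turn $IA_M=(MA_M)^{r_M}\subseteq (MA_M)^n$ into $r_M\ge n$.

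For (c)$\Rightarrow$(b), let $n$ be the smallest integer with $V_{n+1}(I)=\emptyset$ and for $1\le k\le n$ set $J_k=\bigcap_{M\in V_k(I)}M$. Each $J_k$ is radical as an intersection of maximal ideals, condition (2) ensures $V(J_k)=V_k(I)$, and the obvious chain $V_1(I)\supseteq\cdots\supseteq V_n(I)$ gives $J_1\subseteq\cdots\subseteq J_n$. To check $I=J_1\cdots J_n$, I localize at each maximal ideal $M$: if $M\notin V(I)$ both sides become $A_M$; if $M\in V(I)$, write $IA_M=(MA_M)^{n_M}$ via (1), note that $M\in V_k(I)$ iff $k\le n_M$, so $J_kA_M=MA_M$ for $k\le n_M$ and $J_kA_M=A_M$ otherwise, whence $(J_1\cdots J_n)A_M=(MA_M)^{n_M}=IA_M$. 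Agreement of all maximal localizations forces equality of the ideals.

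The main delicate point is the role of the distinct-powers hypothesis: it is used both to extract a well-defined exponent $n_M$ from $IA_M=(MA_M)^{n_M}$ and to invert the inclusion between powers of $MA_M$ in the (a)$\Rightarrow$(c)(2) step. Beyond this, everything is bookkeeping between the local viewpoint (``power of the maximal ideal'') and the global viewpoint (``product of radical ideals''), with the closure condition (2) serving as the bridge that lets $\bigcap_{M\in V_k(I)}M$ realize the intended closed set exactly.
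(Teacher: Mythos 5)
Your proof is correct and follows essentially the same route as the paper's: localize a radical factorization at the maximal ideals of $V(I)$ to read off the three conditions in $(c)$ (the paper writes your finite union of closed sets as $V(H)$ for $H$ the corresponding finite intersection), and conversely recover the ordered factorization by taking $J_k$ to be the radical ideal cutting out $V_k(I)$ and verifying $I=J_1\cdots J_n$ locally. The only quibble is cosmetic: in your displayed identity for $V_n(I)$ the labels $\subseteq$ and $\supseteq$ for the two inclusions appear to be interchanged, though the arguments attached to them are the right ones.
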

\begin{proof}
$(a)\Rightarrow (c).$ 
We have $I=J_1\cdots J_s$ with each $J_i$ a proper radical ideal. 
Let $n$ be an integer between $1$ and $s$ and pick  $M\in V(I)$. 
Since $J_i$ is radical, $I\subseteq J_i$ and $V(I)\subseteq Max(A)$, it follows that $J_iA_M=MA_M$ (resp. $J_iA_M=A_M$) if $J_i\subseteq M$ (resp. $J_i\not\subseteq M$). So $(1)$ holds. Moreover  $IA_{M}=J_1\cdots J_sA_M=M^tA_M$ where $t$ is the number of indices $i$ between $1$ and $s$ such that $J_i\subseteq M$.
Note that $M\in V_n(I)$ iff  $I\subseteq M^n$ iff $IA_M\subseteq  M^nA_{M}$, because $M^nA_{M}\cap A=M^n$.
So $M\in V_n(I)$ iff $J_i\subseteq M$ for at least $n$ indices $i$ iff $M$ contains the ideal  $H=\cap_{1\leq k_1<...<k_n\leq s} (J_{k_1}+\cdots +J_{k_n})$.
Thus $V_n(I)=V(H)$.
It is  clear that  $V_{s+1}(I)$ is empty.

$(c)\Rightarrow (b).$  
Let $n$ be the greateast integer $\geq 1$ such that $V_n(I)$ is nonempty. For each integer $k$ between $1$ and $n$, let $J_k$ be the unique radical ideal of $A$ such that $V(J_k)=V_k(I)$. Since $V_1(I)\supseteq \cdots \supseteq V_n(I)$, we get $I\subseteq J_1\subseteq \cdots \subseteq J_n$. Set $H=J_1\cdots J_n$. Let $M\in V(I)$. Since every $J_i$ is a radical ideal  containing $I$, we see that $HA_M=J_1\cdots J_nA_M=M^kA_M$ where $k$ is the greatest  index $i$ between $1$ and $n$ such that $J_i\subseteq M$. It follows that 
$M\in V(J_k)=V_k(I)$ and $M\notin V(J_{k+1})=V_{k+1}(I)$, in other words, $I\subseteq M^k$ and $I\not\subseteq M^{k+1}$. Consequently, $IA_M=M^kA_M$.  When $M$ is a maximal ideal not containing $I$, we get $IA_M=HA_M=A_M$. Thus we verified locally that $I=H$. The implication $(b)\Rightarrow (a)$ is clear.
\end{proof}

\begin{corollary}\label{5}
 If   $A$ is a ring and ${(I_\alpha)}_\alpha$  a family of proper ideals of $A$ such that each  ${I_\alpha}$ satisfies  the equivalent conditions of Lemma \ref{4}, then so does  $\sum_\alpha I_\alpha$.
\end{corollary}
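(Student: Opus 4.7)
The plan is to verify characterization $(c)$ of Lemma \ref{4} for the ideal $I:=\sum_\alpha I_\alpha$, assuming it is proper (otherwise there is nothing to show). Fix any index $\alpha_0$. Since $I_{\alpha_0}\subseteq I$, the containment $V(I)\subseteq V(I_{\alpha_0})\subseteq \mathrm{Max}(A)$ is automatic, and every maximal ideal $M\in V(I)$ already lies in $V(I_{\alpha_0})$, so all its powers are distinct. Thus the two standing hypotheses of Lemma \ref{4} are inherited from any single member of the family.

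Conditions $(2)$ and $(3)$ fall out with no work. For $(2)$, note the set-theoretic identity
\[
V_n(I)=\{P\in\mathrm{Spec}(A)\mid I_\alpha\subseteq P^n \text{ for every }\alpha\}=\bigcap_\alpha V_n(I_\alpha),
\]
which is closed as an intersection of closed sets. For $(3)$, use $V_n(I)\subseteq V_n(I_{\alpha_0})$ together with the fact that $V_n(I_{\alpha_0})$ is eventually empty.

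The substantive step is $(1)$. Pick $M\in V(I)$; then $M\in V(I_\alpha)$ for every $\alpha$, so by condition $(1)$ applied to each $I_\alpha$ one has $I_\alpha A_M=M^{n_\alpha}A_M$ for a unique $n_\alpha\in\mathbb{N}$ (uniqueness uses that the powers of $MA_M$ are distinct). I would then set $n:=\min_\alpha n_\alpha$ and conclude
\[
IA_M=\sum_\alpha I_\alpha A_M=\sum_\alpha M^{n_\alpha}A_M=M^n A_M.
\]

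The only delicate point — and the reason the whole argument works despite the family being possibly infinite — is the extraction of that minimum: a nonempty subset of $\mathbb{N}$ always has a least element, so the infimum is attained and yields an actual power of $MA_M$. Everything else is formal manipulation with the operators $V$ and $V_n$ and the behaviour of sums of ideals under localization.
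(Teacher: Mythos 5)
Your proof is correct and follows the same route as the paper's, whose entire argument is the one-line observation that $V_n\bigl(\sum_\alpha I_\alpha\bigr)=\bigcap_\alpha V_n(I_\alpha)$. Your explicit verification of condition $(c_1)$ via the minimum of the exponents $n_\alpha$ fills in a detail the paper leaves implicit (in the paper's application the ring is an N-ring, so $(c_1)$ holds automatically for regular ideals), and it is sound.
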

\begin{proof}
The ideal  $H=\sum_\alpha I_\alpha$ satisfies part $(c)$ of Lemma \ref{4}, because $V_n(H)=\cap_\alpha V_n(I_\alpha)$.
\end{proof}

 The following theorem extends \cite[Theorem 2.1]{O} to SP-rings. Recall that every 
   SP-ring is an N-ring, cf. Theorem \ref{10}.

\begin{theorem}\label{7} 
For an N-ring $A$, the following assertions are equivalent.

$(i)$ $A$ is an SP-ring.

$(ii)$ Every  regular maximal ideal contains some regular finitely generated radical ideal.

$(iii)$ The radical of a regular finitely generated ideal is  finitely generated. 

$(iv)$ For every $x\in Reg(A)$, $xA$ is a product of radical ideals.

$(v)$ For every $x\in Reg(A)$ and $n\geq 1$, the set $V_n(x)$ is closed in $Spec(A)$ and $V_m(x)$ is empty for some $m\geq 1$.

$(vi)$ For every regular proper ideal $I$ and $n\geq 1$, the set $V_n(I)$ is closed in $Spec(A)$ and $V_m(I)$ is empty for some $m\geq 1$.

$(vii)$ Every regular proper ideal $I$ can be factorized as $I=J_1J_2\cdots J_n$ with radical ideals $J_1\subseteq J_2\subseteq \cdots \subseteq J_n$.

$(viii)$ Every regular proper ideal $I$ can be factorized uniquely as $I=J_1J_2\cdots J_n$ with radical ideals $J_1\subseteq J_2\subseteq \cdots \subseteq J_n$.
\end{theorem}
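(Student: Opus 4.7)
The plan is to split the equivalences into a ``geometric'' subcycle $(i)\Leftrightarrow(iv)\Leftrightarrow(v)\Leftrightarrow(vi)\Leftrightarrow(vii)\Leftrightarrow(viii)$ that follows almost directly from Lemma~\ref{4} and Corollary~\ref{5}, together with the finiteness conditions $(ii)$ and $(iii)$ which are linked to this cycle via Lemmas~\ref{2} and~\ref{3}. A preliminary observation I will use throughout is that in an N-ring every regular prime is maximal (for otherwise a strict chain $P\subsetneq M$ of regular primes would give a nontrivial chain in the rank-one valuation ring $A_{(M)}$) and the powers of a regular maximal are pairwise distinct (they pull back from the discrete valuation on $A_{(M)}$), so the standing hypotheses of Lemma~\ref{4} are automatic for every regular proper ideal.

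The geometric subcycle then runs as follows. $(i)\Rightarrow(iv)$ is trivial; $(iv)\Rightarrow(v)$ is Lemma~\ref{4}$(a)\Rightarrow(c)$ applied to $xA$; $(v)\Rightarrow(vi)$ uses the Marot hypothesis to write any regular ideal as $I=\sum_{x\in I\cap\mathrm{Reg}(A)}xA$ and invokes Corollary~\ref{5}, since $V_n(I)=\bigcap_x V_n(xA)$ is a closed set and is empty once any one $V_m(xA)$ is empty; $(vi)\Rightarrow(i)$ is Lemma~\ref{4}$(c)\Rightarrow(a)$; $(i)\Rightarrow(vii)$ is Lemma~\ref{4}$(a)\Rightarrow(b)$; and $(vii)\Rightarrow(i)$ as well as $(viii)\Rightarrow(vii)$ are immediate. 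For $(vii)\Rightarrow(viii)$ I will argue that, given any nested factorization $I=J_1\cdots J_n$, the proof of Lemma~\ref{4}$(a)\Rightarrow(c)$ identifies $V(J_k)$ with $V_k(I)$; since a radical ideal in an N-ring is recovered from its Zariski closure as $\bigcap_{M\in V(J_k)}M$, each $J_k$ is uniquely determined by $I$.

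To fold in $(iii)$ I will first note that in an N-ring every f.g.\ regular ideal is invertible: locally at each containing maximal $M$ such an ideal is principally generated by a regular element of the rank-one Manis valuation ring $A_{(M)}$, so the hypothesis $(IA_{(M)})^{-1}=I^{-1}A_{(M)}$ of Lemma~\ref{3} holds. Given this, $(i)\Rightarrow(iii)$ goes by writing an arbitrary f.g.\ regular $I$ as a nested product $I=J_1\cdots J_n$ via $(vii)$ and observing that $\sqrt{I}=J_1\cap\cdots\cap J_n=J_1$; as $J_1(J_2\cdots J_n)=I$ with $I$ invertible, $J_1$ is itself invertible, hence finitely generated. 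The implication $(iii)\Rightarrow(ii)$ is immediate: for a regular maximal $M$ and a regular $x\in M$, the ideal $\sqrt{xA}\subseteq M$ is a finitely generated regular radical ideal.

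The main obstacle is closing the loop, for which I aim for $(ii)\Rightarrow(v)$. Given $x\in\mathrm{Reg}(A)$, for each regular maximal $M\supseteq xA$ the hypothesis furnishes a f.g.\ regular radical ideal $R_M\subseteq M$, which by the previous paragraph is invertible, and by Lemma~\ref{2} combined with the valuation structure of $A_{(M)}$ satisfies $R_M A_{(M)}=MA_{(M)}$. From these local ``uniformisers'' I plan to derive both the closedness of $V_n(xA)$ (exhibiting it as the zero set of a finitely generated ideal built from suitable products of the $R_M$'s) and the eventual emptiness of $V_m(xA)$ (by bounding the exponents $t_M$ in $xA_{(M)}=(MA_{(M)})^{t_M}$ via a compactness-style cover of $V(xA)$ by finitely many of the invertible ideals $R_M$). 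This bounding-and-gluing step is the technical heart of the theorem and the place where Olberding's original domain argument needs the most care to be adapted to the Marot setting.
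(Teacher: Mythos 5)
Your handling of the subcycle $(i)\Leftrightarrow(iv)\Leftrightarrow(v)\Leftrightarrow(vi)\Leftrightarrow(vii)\Leftrightarrow(viii)$ agrees with the paper, which also derives these equivalences from Lemma \ref{4} and Corollary \ref{5} (together with the observation that regular primes of an N-ring are maximal with distinct powers). Your uniqueness argument for $(vii)\Rightarrow(viii)$ --- recovering each $J_k$ as $\bigcap_{M\in V_k(I)}M$ --- is a sound alternative to the paper's cancellation argument via \cite[Theorem 15]{Gf}, and $(i)\Rightarrow(iii)\Rightarrow(ii)$ as you present them are fine, granted the standard fact that finitely generated regular ideals of an N-ring are invertible.

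The proposal is nonetheless incomplete: the implication that returns from $(ii)$ to the main cycle is the technical core of the theorem, and you only announce a plan for it. Your intended $(ii)\Rightarrow(v)$ via a ``compactness-style cover'' does not go through as described: a finite subfamily of the invertible radical ideals $R_M$ covering $V(x)$ does not by itself bound the local exponents $t_M$ with $xA_{(M)}=M^{t_M}A_{(M)}$, and the closedness of $V_n(x)$ is likewise not a formal consequence of having one finitely generated radical ideal below each maximal ideal --- in the paper closedness is only obtained \emph{after} one knows $xA$ is a product of radical ideals, via Lemma \ref{4}. The paper's route is $(ii)\Rightarrow(iii)\Rightarrow(iv)$: for $(ii)\Rightarrow(iii)$ one repeatedly splits off radical factors from a finitely generated regular $I$, writing $I=J_1\cdots J_nB_n$ with each $J_i=(\text{previous cofactor})+C$ an invertible radical ideal inside $M$ (radical by Lemma \ref{2}) and $B_n\not\subseteq M$, the process terminating because $\bigcap_kM^kA_{(M)}$ consists of zero-divisors (\cite[Theorem 2]{L}) while $I$ is regular; then Lemma \ref{3}, applied after checking $(\sqrt{I}A_{(M)})^{-1}=(\sqrt{I})^{-1}A_{(M)}$ through $H=J_1\cap\cdots\cap J_n$, yields that $\sqrt{I}$ is finitely generated. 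For $(iii)\Rightarrow(iv)$ one iterates $I=J_1\cdots J_kB_k$ with $J_{k+1}=\sqrt{B_k}$, and the same theorem of Larsen forces termination. Both steps hinge on the zero-divisor property of $\bigcap_kM^kA_{(M)}$, which is the only finiteness input available in this setting and which your proposal never invokes; without it, or an equivalent, the loop from $(ii)$ back to $(i)$ is not closed.
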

\begin{proof}
$(i)\Rightarrow (ii)$. Let $M$ be a regular maximal ideal, pick $x\in M\cap Reg(A)$ and write as a product $J_1J_2\cdots J_n$ of radical ideals, cf. $(i)$.
Then each $J_i$ is invertible (hence finitely generated) and $M$ contains one of them.

$(ii)\Rightarrow (iii)$. Let $I$ be a regular finitely generated ideal and $M$ a maximal ideal containing $I$. By $(ii)$, there exists some regular finitely generated radical ideal $C$ contained in $M$. By Lemma  \ref{2}, $J_1=I+C$ is a radical ideal. Since $I\subseteq J_1$ are finitely generated ideals (hence invertible), we obtain $I=J_1B_1$ for some invertible ideal $B_1$. We repeat this argument until we get 
$I=J_1\cdots J_nB_n$ with each $J_i$ an invertible radical ideal contained in $M$ and $B_n$ is an ideal not contained in $M$. This can be done because $J_1\cdots J_n\subseteq M^nA_{(M)}$ and the intersection $\cap_k M^kA_{(M)}$ consists of zero-divisors, cf. \cite[Theorem 2]{L}. Since each $J_i$ is invertible and radical,  $H=J_1\cap \cdots \cap J_n$ is a finitely generated radical ideal. We get
$\sqrt{I}=H\cap \sqrt{B_n}\subseteq H$ and $\sqrt{I}A_{(M)}=HA_{(M)}$, because $B_n\not\subseteq M$. 
We have $(\sqrt{I}A_{(M)})^{-1}=(HA_{(M)})^{-1}=H^{-1}A_{(M)}\subseteq (\sqrt{I})^{-1}A_{(M)}\subseteq (\sqrt{I}A_{(M)})^{-1}$, so
$(\sqrt{I}A_{(M)})^{-1}=(\sqrt{I})^{-1}A_{(M)}$. By Lemma \ref{3}, $\sqrt{I}$ is finitely generated.

$(iii)\Rightarrow (iv)$.  
Deny. Let  $x\in Reg(A)$ such that $I=xA$ is not a product of radical ideals.
By $(iii)$, $J_1=\sqrt{I}$ is finitely generated hence invertible. Since $I\subseteq J_1$, it follows that $I=J_1B_1$ for some invertible ideal $B_1\neq A$. We repeat this argument to write $I=J_1J_2B_2$ with $J_2=\sqrt{B_1}$ and some  invertible ideal $B_2\neq A$. Since $J_1$ is finitely generated, we get $J_1^k\subseteq I\subseteq J_2$ for some $k\geq 1$, so $J_1\subseteq J_2$ because $J_2$ is radical.  Since  $I$ is not a product of radical ideals, this process continues indefinitely to produce an infinite increasing  sequence of proper radical ideals $J_1\subseteq J_2\subseteq\ ...$ such that $I\subseteq J_1\cdots J_n$ for each $n\geq 1$. If $M$ is a maximal ideal containing $\cup_n J_n$, then 
$I\subseteq \cap_n M^nA_{(M)}$ which is a contradiction because the intersection $\cap_n M^nA_{(M)}$ consists of zero-divisors, cf. \cite[Theorem 2]{L}.

The equivalencies $(iv)\Leftrightarrow (v)$ and $(vi)\Leftrightarrow (vii)$ follow from Lemma \ref{4}. Also $(v)\Leftrightarrow (vi)$ follows from Lemma \ref{4} and Corollary \ref{5} (note that every regular ideal satisfies $(c_1)$ of Lemma \ref{4} because $A$ is an N-ring).

$(vii)\Rightarrow (viii)$. Let $I$ be a regular proper ideal having two factorizations $I=J_1J_2\cdots J_n=H_1H_2\cdots H_m$
with radical ideals $J_1\subseteq J_2\subseteq \cdots \subseteq J_n$ and $H_1\subseteq H_2\subseteq \cdots \subseteq H_m$. We get $\sqrt{I}=J_1\cap \cdots \cap J_n=J_1$ and similarly $\sqrt{I}=H_1$, hence $J_1=H_1$. By \cite[Theorem 15]{Gf}, regular ideal $J_1=H_1$ is cancellable, so $J_2\cdots J_n=H_2\cdots H_m$ and we complete by induction.
The implication $(viii)\Rightarrow (i)$ is clear.
\end{proof}


Recall that a {\em Pr\" ufer ring} is a ring whose regular finitely generated ideals are invertible (see \cite{Gf}).

\begin{remark}
Let $A$ be a ring.
By \cite[Theorem 3]{L},  $A$ is an N-ring iff $(\alpha)$ $A$ is Pr\" ufer ring with regular primes maximal and $(\beta)$ no regular maximal ideal is idempotent.
As in the domain case (see \cite[Corollary 2.2]{O}), the following two  assertions can be added to the list of equivalent conditions in Theorem \ref{7}.

$(ii')$ $A$ satisfies $(\alpha)$ and condition $(ii)$ of Theorem \ref{7}.

$(iv')$ $A$ satisfies $(\alpha)$ and condition $(iv)$ of Theorem \ref{7}.
\\
Indeed, assuming  $(ii')$, it suffices to prove that $(\beta)$ holds.
Deny, so let $M$ be an idempotent regular maximal ideal  of $A$. By $(ii)$ of Theorem \ref{7}, $M$ contains some regular radical finitely generated ideal $I$. We get $IA_M=MA_M=M^2A_M=(IA_M)^2$, so Nakayama's Lemma gives $IA_M=0$, a contradiction. Finally,  to see that $(iv')$ implies $(ii')$, let $M$ be a   maximal ideal of $A$ containing a regular element $r$. By $(iv')$, $rA$ is a product of radical invertible (hence finitely generated) ideals, so $M$ (being prime) contains one of the factors.
\end{remark}

We end this section by giving two corollaries of Theorem \ref{7}.
We recall the following well-known fact: 

\begin{lemma}\label{20}
 Let $K$ be a field of characteristic zero, $f\in K[X]-\{0\}$, $\pi\in K[X]$ an irreducible polynomial and $k\geq 1$ an integer. Then $\pi^k$ divides $f$ iff $\pi$ divides  $f,f',...,f^{(k-1)}$, where  $f^{(j)}$ is the $j$th derivative of $f$.
\end{lemma}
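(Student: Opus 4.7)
The plan is to prove both directions by induction on $k$, using the Leibniz product rule and the fact that in characteristic zero, an irreducible polynomial $\pi$ is coprime to its derivative $\pi'$ (because $\pi$ is nonconstant, so $\pi'\neq 0$ and $\deg\pi' <\deg\pi$, and $\pi$ is irreducible).

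For the ($\Rightarrow$) direction, I would show by induction on $j$ the stronger claim that $\pi^{k-j}$ divides $f^{(j)}$ for $0\le j\le k-1$. If $f=\pi^{k-j}h$, then differentiating gives
\[
f^{(j+1)}=(k-j)\pi^{k-j-1}\pi'h+\pi^{k-j}h'=\pi^{k-j-1}\bigl((k-j)\pi' h+\pi h'\bigr),
\]
which is divisible by $\pi^{k-(j+1)}$. In particular $\pi\mid f^{(j)}$ for each $j<k$; this direction does not use the characteristic-zero hypothesis.

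For the harder ($\Leftarrow$) direction, I would induct on $k$. The base case $k=1$ is trivial. Assume the statement for $k-1$ and suppose $\pi\mid f^{(j)}$ for $0\le j\le k-1$. In particular, the inductive hypothesis gives $f=\pi^{k-1}g$ for some $g\in K[X]$, and the goal is to show $\pi\mid g$. Applying Leibniz to $f=\pi^{k-1}g$ yields
\[
f^{(k-1)}=\sum_{j=0}^{k-1}\binom{k-1}{j}(\pi^{k-1})^{(k-1-j)}g^{(j)}.
\]
The key computation is that $(\pi^{k-1})^{(i)}$ is divisible by $\pi^{k-1-i}$ (a second tiny induction), so for $j\ge 1$ each term is divisible by $\pi$. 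Hence, reducing modulo $\pi$, only the $j=0$ term survives, and a short induction shows $(\pi^{k-1})^{(k-1)}\equiv (k-1)!\,(\pi')^{k-1}\pmod{\pi}$. Therefore
\[
f^{(k-1)}\equiv (k-1)!\,(\pi')^{k-1}g\pmod{\pi}.
\]

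The conclusion follows: by hypothesis $\pi\mid f^{(k-1)}$, so $\pi$ divides $(k-1)!\,(\pi')^{k-1}g$. In characteristic zero, $(k-1)!$ is a nonzero scalar, and by the coprimality of $\pi$ and $\pi'$ noted at the start, $\pi\nmid (\pi')^{k-1}$. Hence $\pi\mid g$, giving $\pi^k\mid f$. The main obstacle, and the only place where characteristic zero is used, is ensuring that the coefficient $(k-1)!\,(\pi')^{k-1}$ in the congruence above is a unit modulo $\pi$; this is precisely where the characteristic-zero hypothesis cannot be dropped.
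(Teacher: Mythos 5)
Your argument is correct. Note, however, that the paper states Lemma \ref{20} only as a ``well-known fact'' and supplies no proof, so there is nothing to compare against; your write-up fills a gap the authors left to the reader. Both directions check out: the forward direction is the standard Leibniz computation (and, as you observe, needs no hypothesis on the characteristic), and in the converse the congruence $f^{(k-1)}\equiv (k-1)!\,(\pi')^{k-1}g \pmod{\pi}$ together with the coprimality of $\pi$ and $\pi'$ (valid since $\operatorname{char}K=0$ forces $\pi'\neq 0$ of smaller degree) does exactly the required work. A marginally shorter route to the same conclusion is to show that if $\pi^m$ exactly divides $f$ with $m\geq 1$, then $\pi^{m-1}$ exactly divides $f'$ in characteristic zero, and then induct on the exact order; this is essentially your computation packaged differently, so the choice is a matter of taste. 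The only blemish is the typo ``If $f=\pi^{k-j}h$'' where you mean $f^{(j)}=\pi^{k-j}h$; the surrounding text makes the intent unambiguous.
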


\begin{corollary}\label{18}
Let $A$ be a von Neuman regular ring containing $\mathbb{Q}$. Then $A[X]$ is an SP-ring.
\end{corollary}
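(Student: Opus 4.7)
The plan is to apply Theorem~\ref{7} via condition~$(v)$, after first establishing the N-ring property of $A[X]$.

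To see that $A[X]$ is an N-ring, I will use the criterion from the Remark after Theorem~\ref{7}: it suffices to show that $A[X]$ is a Pr\"ufer ring whose regular primes are maximal, and that no regular maximal ideal of $A[X]$ is idempotent. Since $A$ is von Neumann regular, every prime $P$ of $A$ is both minimal and maximal, and $A_P\cong A/P=:K$ is a field containing $\mathbb{Q}$. Hence $A[X]_{A\setminus P}\cong K[X]$ is a PID. For any maximal ideal $M$ of $A[X]$ with $P:=M\cap A$, the localization $A[X]_M$ is a further localization of $K[X]$, hence a PID or a field; thus $A[X]$ is locally Pr\"ufer, so Pr\"ufer. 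For the second part of~$(\alpha)$: if $Q$ is a regular prime of $A[X]$, then $P=Q\cap A$ is maximal (since $\dim A=0$), and $Q\ne PA[X]$ (otherwise every $f\in Q$ has $c(f)\subseteq P\ne A$, which forces $f$ to be a zero-divisor by McCoy, using that in the von Neumann regular ring $A$ the content $c(f)$ is generated by an idempotent, so $c(f)\ne A$ gives $Ann(c(f))\ne 0$), hence $Q/PA[X]$ is a nonzero prime of the PID $K[X]$, necessarily maximal, so $Q$ is maximal in $A[X]$. For non-idempotence of a regular maximal $M$: its image in $K[X]$ is a nonzero maximal $(\bar g)$, and $M=M^2$ would yield $(\bar g)=(\bar g)^2$ in the PID $K[X]$, a contradiction.

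With $A[X]$ now an N-ring, I verify condition~$(v)$ of Theorem~\ref{7}: for every regular $f\in A[X]$ and $n\ge 1$, $V_n(f)$ is closed in $Spec(A[X])$ and $V_m(f)=\emptyset$ for some $m$. Since $A[X]$ is an N-ring and $f$ is regular, $V_n(f)$ consists of maximal ideals. For any maximal $M\in V(f)$, with $P=M\cap A$ and $(\bar g)$ the nonzero maximal ideal of $K[X]$ corresponding to $M$, the condition $f\in M^n$ is equivalent to $\bar g^n\mid\bar f$ in $K[X]$, which by Lemma~\ref{20} (this is where $\mathbb{Q}\subseteq A$ is needed) is equivalent to $\bar g\mid\bar f,\bar f',\dots,\bar f^{(n-1)}$, i.e.\ $(f,f',\dots,f^{(n-1)})A[X]\subseteq M$. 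Thus $V_n(f)=V\bigl((f,f',\dots,f^{(n-1)})A[X]\bigr)$ is Zariski-closed. Taking $m>\deg f\ge\deg\bar f$, we have $\deg(\bar g^m)=m\deg\bar g\ge m>\deg\bar f$, so $\bar g^m\nmid\bar f$, yielding $V_m(f)=\emptyset$. Theorem~\ref{7} then implies that $A[X]$ is an SP-ring.

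The main technical delicacy lies in verifying that $A[X]$ is Pr\"ufer with regular primes maximal; this rests on the observation that $A[X]_M$ is a localization of $K[X]$ (a PID) for each maximal $M$ of $A[X]$, which in turn depends on $A_P$ being a field for every prime $P$ of $A$. The characteristic-zero hypothesis enters only through Lemma~\ref{20}, in translating $\bar g^n\mid\bar f$ into a derivative condition inside $A[X]$.
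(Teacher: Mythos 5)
Your overall strategy coincides with the paper's: reduce to condition $(v)$ of Theorem \ref{7} and compute $V_k(f)$ as the closed set $V(f,f',\dots,f^{(k-1)})$ via Lemma \ref{20}, with $V_m(f)=\emptyset$ for $m>\deg f$. Where you diverge is in establishing that $A[X]$ is an N-ring: the paper simply cites \cite[Theorem 6]{A} (over a von Neumann regular ring, $A[X]$ is an almost multiplication ring) together with \cite[Theorem 6]{L}, whereas you verify the criterion $(\alpha)$--$(\beta)$ of the Remark by hand, using that $A[X]_M$ is a localization of the PID $(A/P)[X]$ for $P=M\cap A$. Your verification is correct (the step ``locally Pr\"ufer, hence Pr\"ufer'' rests on the standard fact that a finitely generated regular locally principal ideal is invertible), and it is more self-contained, at the price of being longer; the paper's citation is shorter but opaque. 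Two small points you should patch. First, Section 2's standing hypothesis is that all rings are Marot, and Theorem \ref{7} and the Remark are stated under that convention; the paper explicitly invokes \cite[Theorem 7.5]{H} to get that $A[X]$ is Marot, and your argument should do the same before applying either. Second, your assertion that $f\in M^n$ is equivalent to $\bar g^{\,n}\mid \bar f$ needs the observation (made explicitly in the paper) that $P=P^2$ in a von Neumann regular ring, whence $M^n=(P,g^n)$ rather than merely $(P^n,\dots,g^n)$; similarly, the degree argument for $V_m(f)=\emptyset$ tacitly uses $\bar f\neq 0$, which follows from your McCoy/content remark but deserves to be restated at that point. Neither issue is a genuine obstruction; both are one-line repairs.
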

\begin{proof}
 By  \cite[Theorem 7.5]{H},  $B=A[X]$ is a Marot ring. By \cite[Theorem 6]{A}, $B$ is an almost multiplication ring (see definition recalled before  Theorem \ref{11}), so $B$ is an N-ring, cf. \cite[Theorem 6]{L}. 
We show that $B$ satisfies condition $(v)$ in Theorem \ref{7}.
Let $f\in Reg(B)$. 
Since $A$ is von Neuman regular, the content of $f$ equals $eA$ for some idempotent $e\in A$, hence $e=1$ because $f$ is regular. 
Let $N\in Max(B)$. Then $N=(M,g)$ for some $M\in Max(A)$ and $g\in B$ which is irreducible modulo $M$. As $M=M^2$, we get $N^k=(M,g^k)$ for each $k\geq 1$.
Since the content of $f$ is $A$, the image of $f$ in $(A/M)[X]$ is nonzero, so $f\not\in  N^{n+1}$, in other words, $V_{n+1}(f)=\emptyset$, where $n$ is the degree of $f$.
Now let $k$ be an integer between $1$ and $n$. Since $\mathbb{Q}\subseteq A$, it follows that $K=A/M$ is a field of characteristic zero. 
If $h\in A[X]$, denote its image in $K[X]$ by $\bar{h}$.
 Then $f\in N^k$ iff $\bar{g}^k$ divides $\bar{f}$ iff $\bar{g}$ divides 
$\bar{f},\overline{f'},...,\overline{f^{(k-1)}}$, where  $f^{(j)}$ is the $j$th derivative of $f$, cf. Lemma \ref{20}. Thus  $V_k(f)$ is the closed set $V(f,f',...,f^{(k-1)})$.

\end{proof}

\begin{corollary}\label{12}
Let $B$ be the direct product  of some family of rings  $(A_i)_{i\in I}$.  Then $B$ is an SP-ring iff  all the $A_i$'s are SP-rings and at most  finitely many of them are not total quotient rings.
\end{corollary}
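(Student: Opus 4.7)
The plan is to dispatch the backward direction via a finite product reduction, then handle the forward direction in two parts: an automatic deduction once we know only finitely many $A_i$ are non-total-quotient, and the main step---establishing that indeed only finitely many can be non-total-quotient.

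For the backward direction, if $A_{i_1},\ldots,A_{i_n}$ exhaust the non-total-quotient factors, set $T=\prod_{i\notin\{i_1,\ldots,i_n\}}A_i$. A product of total quotient rings is a total quotient ring (any element is either componentwise a unit, hence a unit, or has some zero-divisor component, hence a zero-divisor), so $T$ is a total quotient ring and thus SP. Then $B\cong A_{i_1}\times\cdots\times A_{i_n}\times T$ is a finite product of SP-rings, which is SP by the fact already recorded in the paper right after the definition of SP-rings.

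For the forward direction, once we know that at most finitely many non-total-quotient factors occur, the same finite decomposition combined with the converse of the finite product fact (and the triviality that every TQ ring is SP) forces each $A_i$ to be SP. So the real work is the following claim: if $B$ is SP, then at most finitely many $A_i$ fail to be total quotient rings. Suppose for contradiction that a countable subfamily $(A_{i_k})_{k\geq 1}$ consists of non-TQ rings. For each $k$ pick a regular non-unit $a_{i_k}\in A_{i_k}$ lying in a maximal ideal $M_{i_k}$, and set $N_{i_k}=\pi_{i_k}^{-1}(M_{i_k})$, a maximal ideal of $B$. Define a regular element $\alpha=(\alpha_j)\in B$ by $\alpha_{i_k}=a_{i_k}^{k}$ and $\alpha_j=1$ otherwise (regular because each component is regular).

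The key computation is $N_{i_k}^n=\pi_{i_k}^{-1}(M_{i_k}^n)$ for every $n\geq 1$: the idempotent $1-e_{i_k}\in N_{i_k}$ equals its own $n$-th power, so $\ker(\pi_{i_k})=(1-e_{i_k})B\subseteq N_{i_k}^n$, and surjectivity of $\pi_{i_k}$ gives equality. Hence $\alpha\in N_{i_k}^k$ since $\alpha_{i_k}=a_{i_k}^k\in M_{i_k}^k$, so $N_{i_k}\in V_k(\alpha)\subseteq V_m(\alpha)$ for every $m\leq k$; in particular $V_m(\alpha)\neq\emptyset$ for all $m\geq 1$. By Theorem \ref{10}, $B$ is an N-ring, so Theorem \ref{7}(v) forces some $V_m(\alpha)$ to be empty, a contradiction. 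The main obstacle is the design of $\alpha$: one must raise $a_{i_k}$ to the $k$-th power so that $V_m(\alpha)$ stays non-empty for arbitrarily large $m$, and one must carry out the power-ideal identity $N_{i_k}^n=\pi_{i_k}^{-1}(M_{i_k}^n)$ inside the possibly infinite product; the rest of the argument is a routine bookkeeping of finite versus infinite factors.
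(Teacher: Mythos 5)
Your proof is correct and follows essentially the same route as the paper: the backward direction is the same reduction to a finite product times a total quotient ring, and for the forward direction you build the same witness element (regular, with $k$-th powers of regular non-units in the non-total-quotient coordinates) and derive a contradiction from Theorem \ref{7}. The only difference is that you violate condition $(v)$ (non-emptiness of every $V_m(\alpha)$, via the identity $N_{i_k}^n=\pi_{i_k}^{-1}(M_{i_k}^n)$), whereas the paper violates condition $(iii)$ by exhibiting $\sqrt{xB}$ as a strictly increasing union of principal ideals; these are equivalent conditions of the same theorem, so the arguments are interchangeable.
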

\begin{proof}
$(\Rightarrow)$ It is easy to see that all the $A_i$'s are SP-rings. Assume, to the contrary, that  infinitely many of them are not total quotient rings. Considering a direct factor of $B$, we may assume that $I=\mathbb{N}$ and no $A_i$ is a total quotient ring. 
In each $A_i$ select a regular  nonunit  element $a_i$ 
and set $x=(a_1,a_2^2,a_3^3,...)\in B$. Then $\sqrt{xB}=\cup_{n\geq 1}y_nB$ where $y_n=(a_1^{\lceil{1/n}\rceil}, a_2^{\lceil{2/n}\rceil}, a_3^{\lceil{3/n}\rceil}, ...)$. Since  $y_nB\subset y_{n+1}B$ for each $n\geq 1$, it follows that $\sqrt{xB}$ is not finitely generated, a contradiction cf. part $(iii)$ of Theorem \ref{7}.
$(\Leftarrow)$ We combine the following two facts: $(1)$ a direct product of total quotient rings is a total quotient ring hence an SP-ring; $(2)$ a direct product of finitely many SP-rings is an SP-ring.
\end{proof}


\section{SSP-rings}

 Call a ring $A$ a {\em strongly SP-ring (SSP-ring)} if every ideal is a product of radical ideals. Clearly, SSP-domains are exactly SP-domains and every SSP-ring is an SP-ring. A von Neuman regular ring has all ideals radical, so it is a trivial example of SSP-ring. A  ZPI ring (i.e. a ring whose proper ideals are products of prime ideals, see \cite[page 205]{LM}) is another example of an SSP-ring. The ring $B=\mathbb{Z}_2[x,y]/(x^2,xy,y^2)$ is zero-dimensional (hence an SP-ring) but not SSP because $Spec(B)=\{(x,y)B\}$  and $xB$ is not a power of $(x,y)B$.

\begin{proposition}\label{8}
Let $A$ and $B$ be  rings.

$(a)$ If $A$ is an SSP-ring, then every factor ring (resp. fraction ring) of $A$ is SSP.

$(b)$ If $A$ is an SP-ring and $H$ a regular  ideal of $A$, then  $A/H$ is an SSP-ring.

$(c)$ $A\times B$ is an SSP-ring iff $A$ and $B$ are SSP-rings.
\end{proposition}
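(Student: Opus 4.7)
The plan is to treat the three parts in sequence, and the engine driving everything is one trivial observation: in any commutative ring a product of ideals $I = I_1\cdots I_n$ satisfies $I \subseteq I_j$ for each $j$. So whenever we factor an ideal that contains a fixed ideal $H$, each factor automatically contains $H$ and therefore descends cleanly to $A/H$.

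For the factor-ring half of (a), I would lift a given ideal $J$ of $A/H$ to an ideal $I \supseteq H$ of $A$. The SSP hypothesis on $A$ produces a factorization $I = I_1\cdots I_n$ with each $I_i$ a radical ideal of $A$; by the observation above each $I_i$ contains $H$, so $I_i/H$ is a well-defined radical ideal of $A/H$ (under the standard correspondence, radicals of $A/H$ are exactly radicals of $A$ containing $H$). A short direct calculation of products of ideals modulo $H$ then gives $(I_1/H)\cdots(I_n/H) = (I_1\cdots I_n)/H = J$. For the fraction-ring half, I would take an ideal $J$ of $A_S$, set $I = J \cap A$ so that $I A_S = J$, factor $I = I_1\cdots I_n$ in $A$ by SSP, and extend; since localization commutes with radicals, each $I_i A_S$ is radical in $A_S$, and $J = (I_1 A_S)\cdots (I_n A_S)$.

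Statement (b) is the same factor-ring argument with a single substitution: because $H$ is regular and $I \supseteq H$, the ideal $I$ is already regular, so the weaker SP hypothesis on $A$ still supplies the factorization $I = I_1\cdots I_n$ into radicals, and the remainder of the argument is verbatim.

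For (c), the direction $(\Rightarrow)$ is immediate from (a) applied to the ideals $0 \times B$ and $A \times 0$, which exhibit $A$ and $B$ as factor rings of $A \times B$. For $(\Leftarrow)$, every ideal of $A \times B$ has the form $I_A \times I_B = (I_A \times B)(A \times I_B)$; applying SSP to $A$ and to $B$ gives factorizations of $I_A$ and $I_B$ into radicals, which lift to factorizations of $I_A \times B$ and $A \times I_B$ into ideals of the form $K \times B$ and $A \times L$, each radical in $A \times B$ because $\sqrt{K \times B} = \sqrt{K}\times B$ and symmetrically. No serious obstacle is expected; the proof is essentially bookkeeping about the ideal correspondences for quotients, localizations, and direct products, anchored by the observation that radical factorizations of ideals containing $H$ have all factors containing $H$.
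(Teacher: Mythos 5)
Your proposal is correct and follows essentially the same route as the paper: the paper only writes out part $(b)$ (declaring $(a)$ and $(c)$ ``easy to check''), and its argument there is exactly yours --- $I\supseteq H$ regular forces $I$ regular, the SP factorization $I=J_1\cdots J_n$ has every $J_i\supseteq I\supseteq H$, and the factorization descends to $A/H$. Your explicit verifications of $(a)$ and $(c)$ via the standard ideal correspondences for quotients, localizations, and products are the intended routine checks.
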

\begin{proof}
The  assertions $(a)$ and $(c)$ are easy to check. $(b)$. Assume that $A$ be an SP-ring and $H$ a regular proper ideal of $A$. Let $I\supseteq H$ be an ideal of $A$. Then $I$ is a regular ideal, so  $I=J_1\cdots J_n$ with each $J_i$ a radical ideal, because  $A$ is an SP-ring.
We get $I/H=(J_1/H)\cdots (J_n/H)$ with each $J_i/H$ a radical ideal.
\end{proof}

Recall that a ring $R$ is a {\em special primary ring} if $Spec(R)=\{M\}$  and each proper ideal of $R$ is a power of $M$.

\begin{lemma}\label{9}
Let $A$ be a local SSP-ring with maximal ideal $M$.
If $A$ is a one-dimensional domain (resp. a zero-dimensional ring), then $A$ is a discrete rank one   valuation domain (resp. a special primary ring).
\end{lemma}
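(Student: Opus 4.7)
The plan is to handle the two cases of the hypothesis separately, each reducing quickly to a structure already established in the paper.

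For the one-dimensional domain case, I would first observe that every SSP-ring is an SP-ring by definition, so Theorem \ref{10} applies and gives that $A$ is an N-ring. The introduction recalls that a domain is an N-ring iff it is almost Dedekind, meaning $A_N$ is a DVR for every maximal ideal $N$. Since $A$ is already local with maximal ideal $M$, and $A$ is a domain (so all nonzero elements, in particular all of $A-M$, are regular), the regular localization $A_{(M)}$ coincides with the usual $A_M$ and with $A$ itself, and one concludes immediately that $A$ is a discrete rank one valuation domain.

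For the zero-dimensional case I would begin by enumerating the radical ideals of $A$. Since $\mathrm{Spec}(A)=\{M\}$, we have $M=\sqrt{0}$, and for every proper ideal $I$ the equality $\sqrt{I}=M$ holds (because any prime containing $I$ must equal $M$). Thus the only proper radical ideal of $A$ is $M$ itself, while $0$ is radical iff $A$ is reduced iff $A$ is a field. If $A$ is a field the conclusion is trivial. Otherwise $0$ is not radical, so in any factorization $I=J_1\cdots J_n$ of a proper ideal into radical ideals (which exists by the SSP hypothesis), discarding factors equal to $A$ forces every remaining factor to equal $M$. This gives $I=M^n$ for some $n\geq 1$. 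Applied in particular to $I=0$ it produces an $n$ with $M^n=0$, and hence every proper ideal of $A$ is a power of $M$, i.e., $A$ is a special primary ring.

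The main obstacle is essentially conceptual rather than technical: one must notice that when $A$ is not a field, $0$ is \emph{not} an admissible radical factor in the factorization, and that this observation is precisely what forces the nilpotence of $M$. Beyond this point, both assertions follow by direct appeal to Theorem \ref{10} together with the elementary structure of radical ideals in a zero-dimensional local ring.
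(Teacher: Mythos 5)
Your proof is correct. The zero-dimensional half is exactly the paper's argument made explicit: the paper's entire proof is the one-line remark that $M$ is the only nonzero radical ideal (resp.\ the only radical ideal) of $A$, so every (nonzero) proper ideal, being a product of radical ideals, is a power of $M$; your added observation that for $A$ not a field the zero ideal is not radical, which is what forces $0=M^n$ and hence the nilpotence of $M$, is precisely the point the paper leaves implicit. Where you genuinely diverge is the one-dimensional domain case. The paper applies the same uniform observation there: the primes are $0$ and $M$, so the only nonzero proper radical ideal is $M$, hence every nonzero proper ideal is a power of $M$, and a local domain with that property is a discrete rank one valuation domain (one checks $M\neq M^2$, picks $\pi\in M-M^2$ to get $M=\pi A$, etc.). You instead invoke Theorem \ref{10} to conclude that $A$ is almost Dedekind and then use locality to identify $A_{(M)}=A_M=A$. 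This is legitimate --- an SSP-domain is an SP-domain, domains are Marot rings, and there is no circularity since Theorem \ref{10} does not depend on Lemma \ref{9} --- but it imports the full strength of the hardest result of Section 2 to prove something that follows in one line from the scarcity of radical ideals in a one-dimensional local domain. The paper's uniform treatment has the additional merit of making visible that the two cases of the lemma are instances of the same phenomenon.
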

\begin{proof}
It suffices to remark that $M$ is the only nonzero radical ideal (resp. radical ideal) of $A$.
\end{proof}

Recall that an {\em almost multiplication ring} is a ring whose localizations at its prime ideals are discrete rank one  valuation domains or special primary rings. 
The following result is an analogue of Theorem \ref{10} (the proof being rather similar).

\begin{theorem}\label{11}
If    $A$ is an  SSP-ring, then $A$ is an almost multiplication ring.
\end{theorem}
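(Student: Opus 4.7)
The plan is to parallel the proof of Theorem \ref{10} with the SSP hypothesis taking the place of the Marot/regularity machinery. Let $P$ be a prime of $A$. By Proposition \ref{8}(a), $A_P$ is again SSP, so it is enough to prove that a local SSP-ring $(A,M)$ is either a special primary ring or a discrete rank one valuation domain. By Lemma \ref{9} this reduces to showing $A$ is either zero-dimensional or a one-dimensional domain.

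Assume $M$ is not the only prime and choose a prime $P\subsetneq M$ that is maximal with that property. The goal is to prove $P=0$ and that $A$ is a domain; then $A$ is a one-dimensional local SSP-domain with spectrum $\{0,M\}$ and Lemma \ref{9} finishes the argument.

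I would first mimic the two auxiliary claims in the proof of Theorem \ref{10}. By maximality of $P$, the only prime containing $(P,y)$ for $y\in M\setminus P$ is $M$, so the SSP-factorization of each $(P,y^n)$ is forced to be a pure power of $M$. The same comparison as in Theorem \ref{10}(1), performed inside the domain $A/P$, gives $M\neq M^2$: otherwise $(P,y)=(P,y^2)=M$ would yield $\bar y=\bar y^2\bar a$ for some $\bar a$, i.e.\ $\bar y(1-\bar y\bar a)=0$, which is impossible because $\bar y\neq 0$ is a non-unit in $A/P$. Ideal avoidance then supplies $y\in M\setminus(M^2\cup P)$, and a short induction using $M^n=(P,y)^n\subseteq(P,y^n)$ on one side and the non-vanishing of $\bar y^n$ modulo $\bar M^{n+1}$ on the other yields $(P,y^n)=M^n$ for every $n\geq 1$. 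In particular
\[P\subseteq\bigcap_{n\geq 1}M^n\quad\text{and}\quad M^n=P+y^nA\text{ for all }n.\]
Since $A/P$ is a local SSP-domain (Proposition \ref{8}(a)) whose only primes are $0$ and $\bar M=\bar y(A/P)$, Lemma \ref{9} identifies $A/P$ with a DVR.

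The delicate step is to contradict $P\neq 0$. Pick $0\neq p\in P$ and factor $pA$ as a product of radicals. The radicals of $A$ lying above $P$ are $P$, $M$ and $A$, so the factorization collapses to $pA=P^aM^b$ with $a\geq 1$. Exploiting the identity $M^n=P+y^nA$ together with $p\in P\subseteq\bigcap_n M^n$ and the DVR structure of $A/P$, I would show that the only way every nonzero $p\in P$ admits such a factorization is for $P$ itself to be principal, namely $P=pA$. Plugging $P=pA$ into the SSP-factorization of $P$ then produces $p\in pM$, i.e.\ $p=pm$ for some $m\in M$; since $1-m$ is a unit of the local ring $A$, this forces $p=0$, the desired contradiction. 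Hence $P=0$, $A$ is a one-dimensional domain, and we are done.

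The main obstacle is the last step. In Theorem \ref{10} one invoked invertibility of regular radical ideals, a consequence of the Marot/Pr\"ufer nature of N-rings. Without regularity this is unavailable, so the substitute is a direct manipulation of the SSP-factorization of $pA$ combined with the uniform identities $M^n=P+y^nA$ to squeeze $P$ into being principal; once $P$ is principal, a Nakayama-style cancellation in the local ring $A$ closes the argument. A secondary subtlety is that $A$ may contain primes incomparable to the chosen $P$; this is circumvented by systematically working with ideals of the form $(P,y^n)$, whose only minimal prime is pinned down to $M$ by the maximality of $P$.
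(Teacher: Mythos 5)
Your reduction to the local case, the choice of $P$ maximal below $M$ (equivalent to the paper's ``shrinking $M$'' so that $M$ is minimal over $(P,y)$), and the proof that $M\neq M^2$ via comparing $(P,y)$ and $(P,y^2)$ modulo $P$ all match the paper's argument. But from that point on you diverge, and the divergent part has a genuine gap. The paper's proof hinges on a second claim you never establish: \emph{every} prime $Q\subsetneq M$ lies in $M^2$ (proved by factoring $(Q,z^2)$ and noting that a product of two or more proper ideals lies in $M^2$). This forces every proper radical ideal other than $M$ into $M^2$, whence $\pi A=M$ for any $\pi\in M-M^2$, i.e.\ $M$ is \emph{principal}; the contradiction then comes from writing a factor $H_i\subseteq P$ as $H_i=\pi J$ and deducing $H_i=\pi H_i$, hence $sA=\pi sA$ and $s=0$ by Nakayama. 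Your substitute --- making $P$ principal instead of $M$ --- is not carried out and, as sketched, does not work.

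Concretely: (1) Your claim that the SSP-factorization of $pA$ ``collapses to $pA=P^aM^b$'' is unjustified. The factors are proper radical ideals containing $pA$, not containing $P$; and the factor that $P$ absorbs (one exists since $P$ is prime) is merely a radical ideal \emph{contained in} $P$, i.e.\ an intersection of primes that may be incomparable to $P$ or properly below it. Listing ``the radicals lying above $P$'' is the wrong collection, and your closing remark about incomparable primes addresses the ideals $(P,y^n)$ but not the factorization of $pA$, which is where the problem lives. (2) The step ``I would show \dots $P$ itself is principal'' is asserted, not proved; nothing in the identities $M^n=P+y^nA$, $P\subseteq\bigcap_nM^n$ forces this. (3) Even granting $P=pA$, your Nakayama step does not close: $P$ is radical, so its SSP-factorization may simply be $P=P$ (equivalently $a=1$, $b=0$ in your notation), which yields no containment $p\in pM$ and no contradiction. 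The first half of your argument is sound and can be kept, but the endgame needs to be replaced --- most naturally by the paper's claim $(2)$ and the resulting principality of $M$.
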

\begin{proof}
Let $M$ be a   maximal ideal of $A$. By Proposition \ref{8}, $A_M$ is an SSP-ring, hence   we may assume that $A$ is local with maximal ideal  $M$. 
By Lemma \ref{9}, it suffices to show that $A$ has no other nonzero prime ideal except $M$.
Deny, so assume that $A$ has a nonzero prime ideal  $P\subset M$. Pick an element $y\in M-P$. Shrinking $M$, we may assume that $M$ is minimal over $(P,y)$. Assume for the moment that the following two assertions hold.

$(1)$ $M\neq M^2$.

$(2)$ Every  prime ideal $Q\subset M$ is contained in $M^2$.
\\
By $(1)$ we can pick an element $\pi\in M-M^2$. From $(2)$ we get that every radical ideal except $M$ is contained in $M^2$. Then $M=\pi A$ because $A$ is an SSP-ring.
Let $s\in P$ be a nonzero element  and write $sA$ as a product of radical ideals $H_1\cdots H_m$.  As $P$ is prime, it contains some  $H_i$. From $P\subset M=\pi A$, we get succesively: $H_i=\pi J$ for some ideal $J\subseteq M$, $J^2\subseteq H_i$, $J\subseteq \sqrt{H_i}=H_i$, $J=\pi J=H_i=\pi H_i$.  Combining the equalities $sA=H_1\cdots H_m$ and $H_i=\pi H_i$, we get $sA=\pi sA$, so $s(1-\pi t)=0$ for some $t\in A$. Since $1-\pi t$ is a unit, we have $s=0$ which is  a contradiction.
It remains to prove $(1)$ and $(2)$.

{\em Proof of $(1)$.} As $A$ is an SSP-ring, $y\in M-P$ and $M$ is minimal over $(P,y)$,  it follows that $(P,y)$ and $(P,y^2)$  are powers of $M$. Then $M\neq M^2$, otherwise we get $(P,y)=M=(P,y^2)$ which leads to a contradiction after moding out by $P$.

{\em Proof of $(2)$.} Deny. So there exists a  prime ideal
$Q\subset M$ such that $Q\not\subseteq M^2$. Pick an element  $z\in M-Q$. 
As $A$ is an SSP-ring, it follows that $(Q,z^2)$ is a product of radical ideals, so $(Q,z^2)$ is a  radical ideal, because $Q\not\subseteq M^2$. We get  $(Q,z^2)= (Q,z)$ which gives a contradiction after moding out by $Q$. 

\end{proof}


The SSP-ring with finitely many minimal prime ideals are easy to describe.

\begin{corollary}\label{15}
For a ring $A$, the following statements are equivalent:

$(a)$ $A$ is an SSP-ring which has only finitely many minimal prime ideals,

$(b)$ $A$ is an SSP-ring whose minimal prime ideals  are finitely generated,

$(c)$  $A$ is a finite direct product of special primary rings and SP-domains.
\end{corollary}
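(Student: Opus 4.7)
The plan is to establish the equivalences cyclically via $(c) \Rightarrow (a), (b)$, $(a) \Rightarrow (c)$, and $(b) \Rightarrow (a)$. The first two directions are routine: a finite direct product of SP-domains and special primary rings is SSP by iterating Proposition \ref{8}(c), has exactly one minimal prime per factor (each SP-domain has $(0)$ as its only minimal prime, and each special primary ring has a single prime, namely its maximal ideal), and these minimal primes are finitely generated --- trivially when they equal $(0)$, and because every special primary ring has only finitely many ideals (hence is Noetherian) when they equal the maximal ideal.

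The main direction is $(a) \Rightarrow (c)$. By Theorem \ref{11}, $A$ is almost multiplication. I would first show that the minimal primes $P_1, \dots, P_n$ are pairwise comaximal: if $P_i + P_j \subsetneq A$, pick a maximal ideal $M$ containing $P_i + P_j$; then $A_M$ is a discrete rank one valuation domain or a special primary ring, so has a unique minimal prime, yet $P_iA_M$ and $P_jA_M$ are distinct minimal primes of $A_M$ (their contractions to $A$ being the distinct $P_i, P_j$), a contradiction. Pairwise comaximality and the Chinese Remainder Theorem give $A/N \cong \prod A/P_i$ where $N = \bigcap P_i$ is the nilradical; since idempotents lift modulo a nil ideal, the canonical orthogonal idempotents of this product lift to orthogonal idempotents $e_1, \dots, e_n \in A$ summing to $1$, whence $A \cong \prod_i Ae_i$. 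Each $Ae_i$ is SSP by Proposition \ref{8}(c) and inherits a unique minimal prime.

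The heart of the argument is then a structural sublemma: any SSP-ring $B$ with a unique minimal prime $\mathfrak{p}$ is an SP-domain (when $\mathfrak{p}=0$, trivially) or a special primary ring (when $\mathfrak{p} \neq 0$). In the second case, $B$ is almost multiplication by Theorem \ref{11}, and I would show $\dim B = 0$. If not, $\mathfrak{p}$ would not be maximal, so \emph{every} maximal ideal of $B$ strictly contains $\mathfrak{p}$. For each such $M$, the localization $B_M$ is forced to be a discrete rank one valuation domain (the special primary alternative would give the contradiction $\mathfrak{p}=M$), hence $\mathfrak{p}B_M = 0$ and every $x \in \mathfrak{p}$ is annihilated by an element outside $M$. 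Thus the annihilator of $x$ avoids every maximal ideal and equals $B$, forcing $x = 0$ and contradicting $\mathfrak{p} \neq 0$. Consequently $\dim B = 0$, $B$ is local with unique prime $\mathfrak{p}$, and Lemma \ref{9} identifies $B$ as special primary.

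The remaining implication $(b) \Rightarrow (a)$ is, I expect, the main obstacle. For each minimal prime $P_i$, the ring $A_{P_i}$ must be special primary, since a discrete rank one valuation domain contains two prime ideals whereas $A_{P_i}$ contains only the image of $P_i$. Hence $(P_iA_{P_i})^{n_i} = 0$; finite generation of $P_i$ then produces $s_i \in A \setminus P_i$ with $s_iP_i^{n_i} = 0$, forcing $s_i \in P_j$ for every $j \neq i$ because $P_i^{n_i} \not\subseteq P_j$ for distinct minimal primes. To deduce finiteness of the set of minimal primes, my planned route is to apply SSP to the nilradical itself, writing $N = K_1 \cdots K_r$ with pairwise comaximal radical factors and obtaining a finite decomposition $A/N \cong \prod_l A/K_l$; the elements $s_i$ together with the comaximality of the $K_l$ should then force each $A/K_l$ to contain exactly one of the $\bar P_i$, completing the finiteness argument.
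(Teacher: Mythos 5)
Your implications $(c)\Rightarrow(a),(b)$ and $(a)\Rightarrow(c)$ are correct, and the latter is genuinely different from the paper, which disposes of both $(a)\Leftrightarrow(b)$ and $(a)\Rightarrow(c)$ by quoting \cite[Theorem 5]{A} on top of Theorem \ref{11}. Your route --- reading pairwise comaximality of the minimal primes off the local rings of an almost multiplication ring, lifting idempotents through the nilradical to split $A$ into factors with a unique minimal prime, and then the sublemma that an SSP-ring with unique minimal prime $\mathfrak{p}$ is an SP-domain or a special primary ring (with the annihilator argument showing $\mathfrak{p}B_M=0$ at every maximal ideal to kill positive dimension) --- is self-contained and sound. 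What it buys is independence from the external reference; what it costs is length.

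The genuine gap is in $(b)\Rightarrow(a)$. Your plan requires writing the nilradical $N$ as a product $K_1\cdots K_r$ of \emph{pairwise comaximal} radical ideals, but the SSP hypothesis only supplies \emph{some} factorization into radical ideals, and since $N$ is itself radical the trivial factorization $N=N$ is already one such; nothing in the SSP property produces comaximal factors (indeed the canonical factorization in Theorem \ref{7}$(viii)$ has \emph{nested} factors $J_1\subseteq\cdots\subseteq J_n$, the opposite of comaximal). Moreover, the data you do establish --- elements $s_i\notin P_i$ with $s_i\in P_j$ for all $j\neq i$, hence an infinite family of nonzero, pairwise annihilating elements of the reduced ring $A/N$ --- cannot by itself force finiteness of the set of minimal primes: an infinite product of fields is von Neumann regular, hence SSP, and carries exactly such a family while having infinitely many minimal primes. (It is not a counterexample to $(b)\Rightarrow(a)$, since its non-principal minimal primes are not finitely generated, but it shows your extracted information is insufficient.) The paper sidesteps all of this by citing \cite[Theorem 5]{A} for the equivalence of $(a)$ and $(b)$ within the class of almost multiplication rings; you should either do the same or supply a genuinely new argument here --- for instance, finite generation of $P_i$ together with the local structure yields an idempotent splitting off a factor with unique minimal prime $P_i$, but one must still prove this splitting cannot continue indefinitely, and that is precisely the content of the cited theorem.
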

\begin{proof}
$(a) \Leftrightarrow (b)$ follows from Theorem \ref{11} and \cite[Theorem 5]{A}. 
$(c)\Rightarrow (a)$ follows from part $(c)$ of Proposition \ref{8}.
$(a)\Rightarrow (c)$. By Theorem \ref{11} and \cite[Theorem 5]{A}, $A$ is a finite direct product of special primary rings and (almost Dedekind) domains. Apply part $(c)$ of Proposition \ref{8}.
\end{proof}

We extend the well-known fact that Noetherian SP-domains are exactly Dedekind domains.

\begin{corollary}\label{14}
A ring $A$ is a Noetherian SSP-ring iff $A$ is a ZPI-ring.
\end{corollary}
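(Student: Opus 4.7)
The plan is to deduce both directions from facts already established in the paper together with the classical structure theorem for ZPI-rings.

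For the implication $(\Leftarrow)$, I would recall that every prime ideal is radical, so a ring whose proper ideals are products of prime ideals is automatically an SSP-ring. That ZPI-rings are Noetherian is a standard fact (every ZPI-ring is a finite direct product of Dedekind domains and special primary rings, e.g.\ \cite[page 205]{LM}), so nothing else needs to be said here.

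For the harder implication $(\Rightarrow)$, suppose $A$ is a Noetherian SSP-ring. Since $A$ is Noetherian it has only finitely many minimal prime ideals (they are among the associated primes of $0$). Therefore Corollary~\ref{15} applies and writes
\[ A\;\cong\; R_1\times\cdots\times R_k\times D_1\times\cdots\times D_\ell, \]
where each $R_i$ is a special primary ring and each $D_j$ is an SP-domain. Each factor is a factor ring of the Noetherian ring $A$, hence Noetherian. A special primary ring is already a ZPI-ring by definition (every proper ideal is a power of its maximal ideal), and a Noetherian SP-domain is Dedekind by Corollary~\ref{19} (equivalently, by \cite[Theorem~2.4]{VY}). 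Thus $A$ is a finite direct product of Dedekind domains and special primary rings, which is precisely the classical characterization of ZPI-rings (cf.\ \cite[page~205]{LM}).

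I do not expect any significant obstacle: all of the structural work has been done in Theorem~\ref{11} and Corollary~\ref{15}, and the remaining content reduces to the observation that ``Noetherian + SP-domain = Dedekind'' and that a finite direct product of Dedekind domains and special primary rings is exactly a ZPI-ring. The only point worth stating explicitly is why we may invoke Corollary~\ref{15}, namely that a Noetherian ring has finitely many minimal primes.
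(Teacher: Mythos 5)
Your proof is correct, but it takes a slightly different route from the paper's. The paper handles $(\Rightarrow)$ in one line: by Theorem~\ref{11} a Noetherian SSP-ring is a Noetherian almost multiplication ring, and such rings are ZPI-rings by a theorem of Mott (\cite[Theorem 13]{M}). You instead observe that a Noetherian ring has finitely many minimal primes, invoke Corollary~\ref{15} to decompose $A$ as a finite product of special primary rings and SP-domains, note each factor is Noetherian, upgrade the Noetherian SP-domains to Dedekind domains, and then appeal to the classical structure theorem identifying ZPI-rings with finite products of Dedekind domains and special primary rings. Both arguments ultimately rest on Theorem~\ref{11} (Corollary~\ref{15} is itself a consequence of it), but the external input differs: Mott's theorem versus the ZPI structure theorem. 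Your version is a bit longer but gives an explicit decomposition and leans more on results already established in the paper; the paper's is more economical. Your $(\Leftarrow)$ direction matches the paper's exactly. No gaps.
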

\begin{proof}
$(\Rightarrow)$ By Theorem \ref{11}, $A$ is a Noetherian almost multiplication ring, hence a ZPI-ring, cf. \cite[Theorem 13]{M}. 
$(\Leftarrow)$ A ZPI-ring is Noetherian (cf. \cite[Theorem 9.10]{LM}) and  an SSP-ring.
\end{proof}

Recall that an $A$-module $E$ is a {\em multiplication module} if for each submodule $F$ of  $E$, $F = IE$ for some ideal $I$ of $A$.

\begin{proposition}\label{16}
 If $A$ is a von Neuman regular ring and $E$ a multiplication $A$-module, then $A(+)E$ is an SSP ring. 
\end{proposition}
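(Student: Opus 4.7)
The plan is to prove that every ideal $J$ of $B:=A(+)E$ is a product of at most two radical ideals, in three steps: show every ideal of $B$ is homogeneous, rewrite the module part via the multiplication hypothesis, then exhibit an explicit two-factor factorization.

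\textbf{Step 1 (Homogeneity).} Given an ideal $J$ of $B$, set $I = \{a \in A : (a,0) \in J\}$ and $N = \{x \in E : (0,x) \in J\}$. I plan to show $J = I \oplus N$ internally: that is, every $(a, x) \in J$ already satisfies $(a, 0), (0, x) \in J$. Using that $A$ is von Neumann regular, pick an idempotent $e \in A$ with $aA = eA$, so that $a = ea$ and $e = as$ for some $s \in A$. Then $(1-e, 0)(a, x) = (0, (1-e)x) \in J$, hence by subtraction $(a, ex) \in J$. Since $aE = eE$ (immediate from $a = ea$ and $e = as$), we have $ex \in aE$, so $ex = ay$ for some $y \in E$, and $(a, ex)(1, -y) = (a, 0) \in J$; consequently $(0, x) \in J$ as well. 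The inclusion $IE \subseteq N$ then falls out from $(a, 0)(0, y) = (0, ay)$.

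\textbf{Steps 2--3 (Factorization).} Since $E$ is a multiplication module, $N = K_0 E$ for some ideal $K_0$ of $A$; replacing $K_0$ by $K := K_0 + I$ keeps $N = KE$ (using $IE \subseteq N$) and ensures $I \subseteq K$. A direct calculation on generators gives the formula $(L_1(+)E)(L_2(+)E) = L_1 L_2 \,(+)\, (L_1 + L_2)E$ for ideals $L_1, L_2$ of $A$. Specializing to $L_1 = I$ and $L_2 = K$: the containment $I \subseteq K$ yields $(I+K)E = KE = N$, and in the von Neumann regular ring $A$ one has $IK = I$ whenever $I \subseteq K$ (for $a \in I$ one writes $a = a^2 b$, so $a = a(ab) \in I \cdot K$). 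Thus $(I(+)E)(K(+)E) = I(+)N = J$. Each factor $L(+)E$ is a radical ideal because $B/(L(+)E) \cong A/L$ is a quotient of the VN regular ring $A$, hence VN regular, hence reduced.

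\textbf{Main obstacle.} The heart of the argument is Step~1: the homogeneity of ideals of $B$. The VN regular hypothesis is used precisely here to find an idempotent companion $e$ of $a$ and to force $aE = eE$, which lets the tail $ex$ be absorbed back into $J$ via the multiplier $(1, -y)$. Once homogeneity is in hand, the multiplication module hypothesis enters only to write $N = KE$, and the final factorization is a short computation; in particular the stronger statement that at most two radical factors suffice falls out for free.
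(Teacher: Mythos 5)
Your proof is correct and follows essentially the same route as the paper's: reduce to homogeneous ideals, use the multiplication hypothesis to write the module part as $KE$ with $I\subseteq K$, and factor as $(I\oplus E)(K\oplus E)=IK\oplus (I+K)E=I\oplus KE$, using $IK=I$ in the von Neumann regular ring $A$. The only difference is that you verify the homogeneity of ideals of $A(+)E$ and the description of its radical ideals directly, where the paper cites Theorems 3.2 and 3.3 of Anderson--Winders.
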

\begin{proof}
 Denote $A(+)E$ by $B$. As $A$ is von Neuman regular, every ideal of $A$ is radical, so radical ideals of $B$ have the form $I\oplus E$ with $I$ an ideal of $A$, cf. \cite[Theorem 3.2]{AW}. By \cite[Theorem 3.3]{AW}, every ideal of $B$ is homogeneous. Indeed, if  $a\in A$, we have $a=a(ab)$ for some $b\in A$ (because $A$ is von Neuman regular), so $aE=abE$.  Let $H$ be a (homogeneous) ideal of $B$.  As $E$ is a multiplication module, $H=I\oplus JE$ for some ideals $I$ and $J$ of $A$ such that $IE\subseteq JE$. Changing $J$ by $I+J$, we may assume that $I\subseteq J$. As $A$ is von Neuman regular, we have $I=IJ$. 
 Now $I\oplus E$ and $J\oplus E$ are radical ideals and $(I\oplus E)(J\oplus E)=IJ\oplus (I+J)E=H$.
\end{proof}

By Theorem \ref{11}, an SSP ring is  an almost multiplication ring. Our last result characterizes the SSP property among a particular class of almost multiplication rings. For a module $E$, we denote by $Supp(E)$ its support.

\begin{theorem}\label{17}
Let $A$ be a  ring  whose localizations at the maximal ideals are discrete rank one   valuation domains (hence $A$ is an almost multiplication ring).
Then $A$ is an SSP-ring iff for every  ideal $I$ of $A$ the following two conditions hold 

$(1)$ for each $n\geq 1$, $V_n(I)\cap Supp(I)$ is closed in the induced topology of $Supp(I)$,

$(2)$ $V_n(I)\cap Supp(I)$ is empty for  $n$ big enough. 
\end{theorem}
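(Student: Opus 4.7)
The plan is to mirror the proof of Lemma~\ref{4}, with $Supp(I)$ taking the role of the hypothesis $V(I)\subseteq Max(A)$ used there. Two observations underpin the argument. First, since $A_M$ is a discrete rank one valuation domain for each maximal $M$, the only primes of $A$ contained in $M$ are $M$ itself and the kernel $P_M:=\ker(A\to A_M)$, which is a minimal prime; hence every non-maximal prime of $A$ is some $P_M$, and $A_{P_M}$ is the fraction field of $A_M$. Second, for a non-maximal prime $P$, $P\in Supp(I)$ iff $I\not\subseteq P$; in particular any non-maximal $P\in V_n(I)$ has $I\subseteq P^n\subseteq P$, so $P\notin Supp(I)$. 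Thus $V_n(I)\cap Supp(I)\subseteq Max(A)$ for every $n\geq 1$.

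For $(\Rightarrow)$: write $I=J_1\cdots J_s$ with each $J_i$ a proper radical ideal. At a maximal $M\in Supp(I)$, non-vanishing of $IA_M$ forces each $J_iA_M$ to be nonzero; being radical in the DVR $A_M$, it equals $MA_M$ when $J_i\subseteq M$ and $A_M$ otherwise, so $IA_M=(MA_M)^{t(M)}$ with $t(M)=\#\{i:J_i\subseteq M\}$. Setting $H_n:=\bigcap_{|T|=n}(\sum_{i\in T}J_i)$ and rerunning the argument of Lemma~\ref{4} shows that a prime $P$ lies in $V(H_n)$ iff it contains at least $n$ of the $J_i$, which for maximal $M\in Supp(I)$ coincides with $M\in V_n(I)$. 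Combined with $V(H_n)\subseteq V(I)$ (so non-maximal points of $V(H_n)\cap Supp(I)$ are excluded by the preliminary observation), one obtains $V(H_n)\cap Supp(I)=V_n(I)\cap Supp(I)$, which establishes (1); condition (2) is immediate from $t(M)\leq s$.

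For $(\Leftarrow)$: by (2) the integer $n_0:=\max\{k\geq 1:V_k(I)\cap Supp(I)\neq\emptyset\}$ is finite (take $n_0=0$ if no such $k$ exists). For $1\leq k\leq n_0$ set $J_k:=\bigcap_{M\in V_k(I)\cap Supp(I)}M$, a radical ideal with $J_1\subseteq\cdots\subseteq J_{n_0}$, and condition~(1) ensures $V(J_k)\cap Supp(I)=V_k(I)\cap Supp(I)$. To handle primes outside $Supp(I)$, let $K$ be the intersection of the minimal primes of $A$ containing $I$, another radical ideal, and define $H:=J_1\cdots J_{n_0}\cdot K$, a product of $n_0+1$ radical ideals. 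I would verify $H=I$ locally at each maximal $M$: for $M\in Supp(I)$ the DVR computation of Lemma~\ref{4} gives $(J_1\cdots J_{n_0})A_M=IA_M$ and $KA_M=A_M$ (no minimal prime containing $I$ lies under $M$, since $P_M\not\supseteq I$); for $M\in V(I)\setminus Supp(I)$, $P_M\supseteq I$ is among the primes intersected to form $K$, so $K\subseteq P_M$ and $KA_M=0=IA_M$; for $M\notin V(I)$ every factor localizes to the unit ideal.

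The main obstacle I anticipate is the case of infinitely many minimal primes above $I$. The step ``$KA_M=A_M$ at $M\in Supp(I)$'' uses prime avoidance to pass from $M\supseteq K$ to $M\supseteq$ some minimal prime $P\supseteq I$, a deduction valid only for finite intersections. In the infinite case one could have $M\supseteq K$ with no such $P\subseteq M$, making $KA_M=MA_M$ and spoiling the local match. Resolving this will likely require applying conditions~(1) and~(2) to auxiliary ideals such as $K$ itself or $\sqrt{I}$ in order to control the Zariski closure of the set of minimal primes above $I$ and force $K$ to sit correctly inside each $P_M$; this closed-set bookkeeping, combined with the almost-multiplication structure of $A$, is where the bulk of the technical work lies.
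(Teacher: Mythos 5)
Your forward direction is correct and is essentially the paper's argument verbatim (factor $I=J_1\cdots J_s$, compute $IA_M=M^{t}A_M$ at $M\in V(I)\cap Supp(I)$, and realize $V_n(I)\cap Supp(I)$ as $V(H_n)\cap Supp(I)$ with $H_n=\cap_{|T|=n}\sum_{i\in T}J_i$, finite prime avoidance doing the rest). The converse, however, has a genuine gap, and it is exactly the one you flag in your last paragraph: your factorization $H=J_1\cdots J_{n_0}\cdot K$ requires $KA_M=A_M$ for every maximal $M\in Supp(I)$ containing $I$, i.e.\ $K\not\subseteq M$. Since $K$ is an intersection of possibly infinitely many minimal primes, $K\subseteq M$ can occur without $M$ containing any single one of them; at such an $M$ the ideal $KA_M$ is a nonzero proper radical ideal of the discrete valuation ring $A_M$, hence equals $MA_M$, and then $HA_M=IA_M\cdot MA_M\subsetneq IA_M$, so the local verification fails. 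You explicitly leave this unresolved (``this is where the bulk of the technical work lies''), so the proof is incomplete as written. (The construction of the $J_k$ for $k\geq 2$ is fine once condition $(1)$ is invoked, and your reduction $V_n(I)\cap Supp(I)\subseteq Max(A)$ is also fine.)

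The paper's fix is short and avoids the issue entirely: do not introduce $K$ at all, and take the first radical factor to be $H_1=\sqrt{I}$ (for $i\geq 2$ take $H_i=\sqrt{I+C_i}$, where $V(C_i)\cap Supp(I)=V_i(I)\cap Supp(I)$ is the closed set supplied by condition $(1)$; each $H_i$ then contains $I$). Because localization commutes with radicals and each $A_M$ is a domain, $\sqrt{I}A_M=\sqrt{IA_M}$ equals $0$ when $M\in V(I)\setminus Supp(I)$, equals $MA_M$ when $M\in V(I)\cap Supp(I)$, and equals $A_M$ when $I\not\subseteq M$. So the single ideal $\sqrt{I}$ performs simultaneously the roles you assigned to $J_1$ and to $K$, with no infinite prime-avoidance needed, and the three-case local check of $I=H_1\cdots H_n$ then closes exactly as in your outline.
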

\begin{proof}
$(\Rightarrow)$ Let $I$ be an  ideal  of $A$. We may assume that $0\neq I\neq A$.
Write $I$ as a product of proper radical ideals $J_1\cdots J_s$. 
Let $n$ be an integer between $1$ and $s$ and pick  $M\in V(I)\cap Supp(I)$. We have  $I\subseteq M^n$ iff $IA_M\subseteq  M^nA_{M}$, because $M^nA_{M}\cap A=M^n$.
Note that  $J_iA_M$ is a  radical ideal of $A_M$ and $MA_M$ is the only  proper nonzero radical ideal of $A_{M}$. So $IA_{M}=J_1\cdots J_sA_M=M^tA_M$ where $t$ is the number of indices $i$ between $1$ and $s$ such that $J_i\subseteq M$. So $M\in V_n(I)\cap Supp(I)$ iff $J_i\subseteq M$ for at least $n$ indices $i$. 
Therefore $V_n(I)\cap Supp(I)=V(H)\cap Supp(I)$ where $H$ is the ideal 
$\cap_{1\leq k_1<...<k_n\leq s} (J_{k_1}+\cdots +J_{k_n})$.
It is also clear that  $V_{s+1}(I)\cap Supp(I)$ is empty.

$(\Leftarrow)$  Let $I$ be a proper ideal  of $A$. As $A$ is reduced, we may assume $I\neq 0$.
Let $n$ be the greateast integer $\geq 1$ such that $V_n(I)\cap Supp(I)$ is nonempty. For each integer $i$ between $2$ and $n$, let $H_i$ be an  ideal of $A$ such that $V(H_i)\cap Supp(I)=V_i(I)\cap Supp(I)$. 
Note that $V(I+H_i)\cap Supp(I)=V(I)\cap V(H_i)\cap Supp(I)=V(I)\cap V_i(I)\cap Supp(I)=V_i(I)\cap Supp(I)$. 
Changing $H_i$ by $\sqrt{I+H_i}$, we may assume that each $H_i$ is a radical ideal containing $I$. We set $H_1=\sqrt{I}$.
We check locally that  $I=H_1\cdots H_n$. Let $M\in Max(A)$. We examine the  cases: $(1)$ $M\not\supseteq I$, $(2)$ $M\in V(I)-Supp(I)$, $(3)$ $M\in V(I) \cap Supp(I)$.
 In case $(1)$ we have  $IA_M=A_M=(H_1\cdots H_n)A_M$ because every $H_i$ contains $I$.  In case $(2)$ we have $IA_M=0=(H_1\cdots H_n)A_M$, because  $A_M$ is a domain so  $H_1A_M=\sqrt{IA_M}=0$.  
 Assume that we are in case $(3)$. We have $IA_M\neq 0$, so $IA_M=M^kA_M$ for some $k\geq 1$ because $A_M$ is a discrete rank one   valuation domain. Hence $I\subseteq M^k$ and $I\not\subseteq M^{k+1}$, that is, $M\in V_k(I)-V_{k+1}(I)$. We have $H_i\subseteq M$ iff 
 $H_iA_M=MA_M$ (since $H_i$ is radical)
 iff $i\leq k$. Consequently, $IA_M=M^kA_M=(H_1\cdots H_n)A_M$.
\end{proof}

\
\
\

{\bf Acknowledgements.} 
We thank the referee whose suggestions improved our paper.
The first author is highly grateful to ASSMS GC University Lahore, Pakistan in supporting and facilitating this research.
The second author gratefully acknowledges the warm
hospitality of Abdus Salam School of Mathematical Sciences GC University Lahore during his  visits in the period 2006-2016.


\begin{thebibliography}{9999}

\bibitem{A} D.D. Anderson, Multiplication ideals, multiplication rings and the ring $R(X)$, Can. J. Math.
{\bf 28} (1976), 760-768.

\bibitem{AW} D.D. Anderson and M. Winders, Idealization of a module, J. Comm. Algebra {\bf 1} (2009), 3-56.


\bibitem{G} R. Gilmer, \textit{Multiplicative Ideal Theory}, Marcel Dekker, New York, 1972.

\bibitem{Gf} M. Griffin, Pr\"{u}fer rings with zero divisors, J. Reine Angew. Math. {\bf 239/240} (1970), 55-67.


\bibitem{H} J. Huckaba, \textit{Commutative Rings with Zero Divisors}, Dekker, New York, 1988.




\bibitem{L} M. Larsen, A generalization of almost Dedekind domains, J. Reine Angew. Math. {\bf 245} (1970), 119-123.

\bibitem{LM} M. Larsen and P.  McCarthy, \textit{Multiplicative Theory of Ideals}, Academic Press, New York, 1971.




\bibitem{M} J. Mott, Multiplication rings containing only finitely many minimal prime ideals, J. Sci. Hiroshima Univ. Ser. A-I {\bf 33} (1969), 73-83.




\bibitem{O} B. Olberding, Factorization into radical ideals, in {\em Arithmetical properties of commutative Rings and monoids} (S. Chapman, editor), Lect. Notes in Pure Appl. Math. Vol {\textbf{241}}. Chapman \& Hall, 363-377, 2005.


\bibitem{VY} N. Vaughan and R. Yeagy, Factoring ideals into semiprime ideals, Can. J. Math. {\bf 30} (1978), 1313-1318. 



\end{thebibliography}
\end{document}